\newcommand{\mr}{\mathrm}
\newcommand{\mc}{\mathcal}
\newcommand{\mb}{\mathbf}
\newcommand{\bs}{\boldsymbol}
\newcommand{\mbb}{\mathbb}
\newcommand{\pp}{\partial}
\newcommand{\dualpr}[1]{\langle #1\rangle}
\renewcommand{\div}{\nabla\cdot}
\newtheorem{lemma}{Lemma}[section]
\newtheorem{theorem}{Theorem}[section]
\newtheorem{prop}{Proposition}[section]
\begin{document}

\title{A note on devising HDG+ projections on polyhedral elements}
\author{Shukai Du\thanks{Email: shukaidu@udel.edu}\,\, and Francisco-Javier Sayas\\
Department of Mathematical Sciences, University of Delaware}
\maketitle

\begin{abstract}
In this note, we propose a simple way of constructing HDG+ projections on polyhedral elements. The projections enable us to analyze the Lehrenfeld-Sch\"{o}berl HDG (HDG+) methods in a very concise manner, and make many existing analysis techniques of standard HDG methods reusable for HDG+.
The novelty here is an alternative way of constructing the projections without using $M$-decompositions as a middle step. 
This extends our previous results [S. Du and F.-J. Sayas, SpringerBriefs in Mathematics (2019)] (elliptic problems) and [S. Du and F.-J. Sayas, Math.~Comp.~89~(2020), 1745-1782] (elasticity) to polyhedral meshes. 
\end{abstract}

\section{Introduction}
The Lehrenfeld-Sch\"{o}berl HDG (HDG+) methods \cite{LeSc:2010} have recently gained considerable interest since they superconverge on polyhedral meshes in addition to the easiness of implementation.
In \cite{DuSa:2019} (elliptic problems) and \cite{DuSa_elas:2020} (elasticity), we proposed mathematical tools to incorporate the analysis of the HDG+ methods into the projection-based error analysis setting \cite{CoGoSa:2010}.
In this way, we can reuse existing analysis techniques and avoid repeated or unnecessary arguments.  In \cite{DuSa:2019} and \cite{DuSa_elas:2020}, the projections were devised for simplicial elements. In this paper, we extend the results to polyhedral elements.

To motivate the discussion, let us review some existing works. For mixed finite element methods (or simply mixed methods), the core in their design and analysis is the local projection operators; see, for instance, \cite{RaTh:1977} for the Raviart-Thomas (RT) projection, \cite{BrDoMa:1985} for the Brezzi-Douglas-Marini (BDM) projection, and \cite{Ne:1980,Ne:1986} for the N\'{e}d\'{e}lec projection.
These projections satisfy certain commutativity properties that can be used to analyze the numerical methods in a very concise way.
Inspired by the mixed method projections, the first HDG projection was devised in \cite{CoGoSa:2010}. It enables us to analyze a wide class of HDG methods in an unified, and also simple and concise manner. Since for both the mixed methods and the HDG methods, the core in their error analysis is the specially devised projections that are tailored to the numerical schemes, this way of analysis is often referred to as the ``projection-based error analysis'' (PBEA).

PBEA has been widely used to analyze HDG methods. See, for instance, the error analysis of the HDG methods for heat/fractional diffusion \cite{ChCo:2012,CoMu:2015}, acoustic waves \cite{CoQu:2014,CoFuHuJiSaSa:2018}, Stokes equations \cite{CoGoNgPeSa:2011}, Helmholtz equations \cite{GrMo:2011}. On the other hand, new HDG projections have been devised, incorporating more variants of HDG methods into the PBEA setting; see the work of $M$-decompositions \cite{CoFuSa:2017}, an mathematical tool to systematically devise superconvergent HDG methods on polyhedral meshes. Since all $M$-decompositions HDG methods have associated HDG projections, all of their analysis can be incorporated into the PBEA setting.

Despite the wide and successful applications of HDG projections, the error analysis of some important HDG methods can {\sl not} be incorporated into the PBEA setting until very recently. An important example is the HDG+ method, proposed first by Lehrenfeld and Sch\"{o}berl \cite{LeSc:2010} and then analyzed by Oikawa \cite{Oi:2014} in the setting of elliptic diffusion. The method uses $P_k^d$-$P_{k+1}$-$P_k$ to approximate the flux-primal-trace triplet, and it achieves optimal convergence for all variables on general polyhedral meshes. Compared to the standard $P_k^d$-$P_k$-$P_k$ HDG method, the HDG+ method is as efficient as the standard method, since the two methods share the same size of the global systems. Moreover, the HDG+ method does not suffer from the problem of losing convergence order, which is observed for the standard HDG method on non-simplicial polyhedral meshes, or for elastic problems with strong symmetric stress formulation. Finally, the implementation of the HDG+ method is straight-forward, since it can be regarded as a simple tweak of the standard HDG method.

As is mentioned before, most of the existing error analysis of the HDG+ methods (see, for instance, \cite{HuPrSa:2017,Oi:2014,QiShSh:2018,QiSh:2016}) can not be incorporated into the PBEA setting. This makes their error analysis less concise compared to those HDG methods that can be analyzed by HDG projections. More importantly, this leads to a scattered style of error analysis and makes it hard for us to reuse the existing projection-based analysis techniques that were established in a decade.  All the above indicates the necessity to develop mathematical tools to incorporate the error analysis of HDG+ methods into the PBEA setting. In this way, many existing works using HDG projections, such as the analysis of the HDG methods for various types of evolutionary equations and Helmholtz equations (see, for instance, \cite{ChCo:2012,CoMu:2015,CoQu:2014,GrMo:2011,CoFuHuJiSaSa:2018}), can be automatically reused for the design and analysis of the HDG+ methods.

Following this idea, we have devised the HDG+ projections in \cite{DuSa:2019} for elliptic problems and in \cite{DuSa_elas:2020} for elasticity with strong symmetric stress formulation. We have sucessfully used the projections, combined with some existing analysis techniques of the standard HDG methods, to derive the error estimates of the HDG+ methods for heat diffusion and acoustic waves in \cite{DuSa:2019}, and for time-harmonic and transient elastic waves in \cite{DuSa_elas:2020}. For simplicity, we have limited the discussions on simplicial meshes in \cite{DuSa:2019,DuSa_elas:2020}. In this paper, we extend the results to polyhedral meshes by using an alternative way of constructing the projections without using $M$-decompositions \cite{CoFuSa:2017} as a middle step.

We finally give an outline for the rest of the paper. 
In Section \ref{sec:hdgp_pj_ell}, we devise the HDG+ projection for elliptic problems.
We also demonstrate how to use the projection to analyze the HDG+ method for a model problem.
In Section \ref{sec:hdgp_pj_elas}, we devise the HDG+ projection for elasticity. We will not demonstrate its usage, since this has been done in \cite{DuSa_elas:2020}. The projection we devise here satisfies \cite[Theorem 2.1]{DuSa_elas:2020} and it will render all the analysis and estimates in \cite[Sections 5,6\&7]{DuSa_elas:2020} valid for general polyhedral meshes.

\section{The projection for elliptic problems}
\label{sec:hdgp_pj_ell}
In this section, we devise the HDG+ projection and demonstrate how to use it to derive the error estimates for the HDG+ method. Note that the first analysis of the HDG+ method was obtained in \cite{Oi:2014}. However, our proof here is quite different from the proof in \cite{Oi:2014}. Instead, as we will demonstrate in Section \ref{sec:prf_est}, the proof we obtained is very similar to those used in \cite{CoGoSa:2010}, thanks to the introduction of the HDG+ projection. In this way, we are able to reuse the existing projection-based error analysis to analyze the HDG+ method in a very concise way. Consequently, we can unify the analysis of the standard HDG and HDG+ methods.

\quad\\
\noindent{\bf Notation.}
Let us first introduce some notation that will be used throughout the paper. 
Let $\Omega\subset\mbb R^d$ ($d=2,3$) be a polyhedral domain with Lipschitz continuous boundary. We consider a triangulation of $\Omega$ denoted by $\mc T_h$, where each $K\in\mc T_h$ is a star-shaped polyhedron. We use the standard notation $h_K$ as the diameter of $K$. Let $\mc E_K$ and $\mc E_h$ denote the collections of all the faces of $K$ and $\mc T_h$, respectively. We write $h:=\max_{K\in\mc T_h}h_K$ as the mesh-size and $h_\mr{min}:=\min_{K\in\mc T_h}h_K$ as the smallest diameter among all elements.

Let $\mc P_k(X)$ denote the polynomial space of degree $k$ on $X$ and let $\Pi_k:L^2(X)\rightarrow\mc P_k(X)$ and $\bs\Pi_k:L^2(X)^d\rightarrow\mc P_k(X)^d$ be the corresponding $L^2$ projections. Here $X$ can be an element $K$ or a face of $K$. Let $\mc R_k(\pp K):=\prod_{F\in\mc E_K}\mc P_k(F)$ and let $\mr P_M: \prod_{K\in\mc T_h}L^2(\pp K)\rightarrow\prod_{K\in\mc T_h}\mc R_k(\pp K)$ be the corresponding $L^2$ projection. 
We finally introduce the following notation for the discrete inner products on $\mc T_h$ and $\pp\mc T_h$:
\begin{align*}
(*_1,*_2)_{\mc T_h}=\sum_{K\in\mc T_h}(*_1,*_2)_K,\quad
\dualpr{*_1,*_2}_{\pp\mc T_h}=\sum_{K\in\mc T_h}\dualpr{*_1,*_2}_{\pp K},
\end{align*}
where $(\cdot,\cdot)_K$ and $\dualpr{\cdot,\cdot}_{\pp K}$ denote the $L_2$ inner products on $K$ and $\pp K$, respectively.  

\quad\\
\noindent{\bf Model problem.}
In this section, we consider the following steady-state diffusion equations:
\begin{subequations}\label{eq:ell_pde}
\begin{alignat}{5}
\kappa^{-1}\mb q+\nabla u &= 0 &\quad& \mr{in}\ \Omega,\\
\div\mb q &= f && \mr{in}\ \Omega,\\
u &= g && \mr{on}\ \Gamma:=\pp\Omega,
\end{alignat}
\end{subequations}
where the parameter $\kappa\in L^\infty(\Omega)$ is uniformly positive, the forcing term $f\in L^2(\Omega)$ and the Dirichlet data $g\in H^{\frac{1}{2}}(\Gamma)$. 
We introduce a regularity condition:
\begin{align}\label{eq:ell_reg}
\|\kappa\nabla\phi\|_{r_0,\Omega}+\|\phi\|_{1+r_0,\Omega}\le C_\mr{reg} \|\div(\kappa\nabla\phi)\|_\Omega
\end{align}
holds for any $\phi\in H_0^1(\Omega)$ such that the right term of the above inequality is finite, where $r_0\in(\frac{1}{2},1]$ is a fixed index and $C_\mr{reg}$ is a positive constant depending only on $r_0$, $\kappa$, and $\Omega$.

\quad\\
\noindent{\bf Shape-regularity of the meshes.} For each $K\in\mc T_h$, we assume the number of the faces of $K$ is bounded by a fixed constant. 
We define the shape-regularity constant of $K$, denoted as $\gamma_K$, as the minimal value $\gamma$ satisfying the following conditions (see \cite{BrSc:2008,DiDr:2017,Oi:2014} for more on shape-regularity of polyhedral elements):
\begin{itemize}
\item Chunkiness condition: $K$ is star-shaped with respect to a ball with the radius $\rho_K$ such that $\frac{h_K}{\rho_K}\le\gamma$.
\item Simplex condition: $K$ admits a simplex decomposition such that for each simplex $T$, if $h_T$ is the diameter of $T$ and $\rho_T$ is the inradius, then $\frac{h_T}{\rho_T}\le\gamma$.
\item Local quasi-uniformity: 
Let $A^\mr{max}$ and $A^\mr{min}$ be the areas of the largest and the smallest faces of $K$, respectively, then $\frac{A^\mr{max}}{A^\mr{min}}\le\gamma$.
\end{itemize}
We assume that there is a fixed positive constant $\gamma_0$ such that $\gamma_0\ge\gamma_K$ for all $K\in\mc T_h$ (consequently the shape-regularity of $\mc T_h$ is controlled).  

\quad\\
\noindent{\bf HDG+ method.}
Let us first define the approximation spaces:
\begin{align*}
\mb V_h:=\prod_{K\in\mc T_h}\mc P_k(K)^d,\quad
W_h:=\prod_{K\in\mc T_h}\mc P_{k+1}(K),\quad
M_h:=\prod_{F\in\mc E_h}\mc P_k(F).
\end{align*}
The HDG+ scheme is defined as follows: 
find $(\mb q_h,u_h,\widehat{u}_h)\in \mb V_h\times W_h\times M_h$ such that
\begin{subequations}\label{eq:HDG_s}
\begin{align}
\label{eq:HDG_s_1}
(\kappa^{-1}\mb q_h,\mb r)_{\mc T_h} - (u_h,\div\mb r)_{\mc T_h} + \dualpr{\widehat{u}_h,\mb r\cdot\mb n}_{\pp\mc T_h} &= 0,\\
\label{eq:HDG_s_2}
(\div\mb q_h,w)_{\mc T_h} + \dualpr{\tau\mr P_M(u_h-\widehat{u}_h),w}_{\pp\mc T_h}
&=(f,w)_{\mc T_h},\\
\label{eq:HDG_s_3}
-\dualpr{\mb q_h\cdot\mb n+\tau(u_h-\widehat{u}_h),\mu}_{\pp\mc T_h\backslash\Gamma} &= 0,\\
\label{eq:HDG_s_4}
\dualpr{\widehat{u}_h,\mu}_{\Gamma} &= \dualpr{g,\mu}_\Gamma,
\end{align}
\end{subequations}
for all $(\mb r,w,\mu)\in\mb V_h\times W_h\times M_h$. The stabilization function 
$\tau\in\prod_{K\in\mc T_h}\mc R_0(\pp K)$ and it satisfies $c_1h_K^{-1}\le \tau\big|_{\pp K}\le c_2h_K^{-1}$ for all $K\in\mc T_h$, where $c_1$ and $c_2$ are two fixed positive constants. 

\subsection{Main results}
We now present the main results in this section -- the HDG+ projection (Theorem \ref{thm:hdgp_pj_ell}) and its application (Theorem \ref{thm:est_ell}).
Their proofs can be found in Section \ref{sec:ell_hdg+_pfs} and Section \ref{sec:prf_est}.

\quad\\
\noindent{\bf HDG+ projection.}
\begin{subequations}\label{eq:def_hdg+_ell}
For any $K\in\mc T_h$, the HDG+ projection is defined as follows:
\begin{align*}
\bs\Pi^\mr{H+}: H^{\frac{1}{2}+\epsilon}(K)^d\times H^{\frac{1}{2}+\epsilon}(K)&\rightarrow \mc P_k(K)^d\times\mc P_{k+1}(K),\\
(\mb q,u)&\mapsto (\bs\Pi^\mr{H+}\mb q,\Pi^\mr{H+}u),
\end{align*}
where the first component $\bs\Pi^\mr{H+}$ is defined by solving
\begin{alignat}{5}\label{eq:def_hdg+_ell_2}
(\bs\Pi^\mr{H+}\mb q-\mb q,\mb r)_K &=0 &\quad &\forall\mb r\in\big(\nabla\mc P_{k+1}(K)\big)^{\perp_k},\\
\label{eq:def_hdg+_ell_3}
(\bs\Pi^\mr{H+}\mb q-\mb q,\nabla w)_K&=\dualpr{\mr P_M(\mb q\cdot\mb n)-\mb q\cdot\mb n,w}_{\pp K}
&&\forall w\in \mc P_{k+1}(K),
\end{alignat}
and the second component $\Pi^\mr{H+}u:=\Pi_{k+1}u$, namely the $L^2$ projection to $\mc P_{k+1}(K)$.
In the above equations, $(\cdot)^{\perp_k}$ represents the orthogonal complement in the background space $\mc P_k(K)^d$ and $\epsilon$ is any small positive value.

We also define an operator:
\begin{align}\label{eq:def_hdg+_ell_4}
\delta_{\pm\tau}^{\Pi^\mr{H+}}(\mb q,u):=\bs\Pi^\mr{H+}\mb q\cdot\mb n-\mr P_M(\mb q\cdot\mb n)
\pm\tau(\mr P_M\Pi^\mr{H+} u-\mr P_Mu)\in\mc R_k(\pp K).
\end{align}
\end{subequations}
We call $\delta_{\pm\tau}^{\Pi^\mr{H+}}$ the {\sl boundary remainder} of $\bs\Pi^\mr{H+}$.

\begin{theorem}[HDG+ projection]\label{thm:hdgp_pj_ell} 
For $(\mb q,u)\in H^{\frac{1}{2}+\epsilon}(K)^d\times H^{\frac{1}{2}+\epsilon}(K)$, the projection $\bs\Pi^\mr{H+}$ and the boundary remainder $\delta_{\pm\tau}^{\Pi^\mr{H+}}(\mb q,u)$ are well defined by \eqref{eq:def_hdg+_ell} and they satisfy
\begin{subequations}\label{eq:hdgp_pj_ell_eqn}
\begin{alignat}{5}
\label{eq:hdgp_pj_ell_eqn_1}
(\Pi^\mr{H+}u - u,v)_K &= 0\qquad\,\forall v\in\mc P_{k-1}(K),\\
\label{eq:hdgp_pj_ell_eqn_2}
\dualpr{\bs\Pi^\mr{H+}\mb q\cdot\mb n-\mb q\cdot\mb n\pm\tau(\Pi^\mr{H+} u-u),\mu}_{\pp K} &= 
\dualpr{\delta_{\pm\tau}^{\Pi^\mr{H+}}(\mb q,u),\mu}_{\pp K}\\
\nonumber&\qquad\qquad\forall \mu\in\mc R_k(\pp K),\\
\label{eq:hdgp_pj_ell_eqn_3}
(\div(\bs\Pi^\mr{H+}\mb q-\mb q),w)_K\pm\dualpr{\tau\mr P_M(\Pi^\mr{H+}u-u),w}_{\pp K}
&= \dualpr{\delta_{\pm\tau}^{\Pi^\mr{H+}}(\mb q,u), w}_{\pp K}\\
\nonumber&\qquad\qquad\forall w\in\mc P_{k+1}(K).
\end{alignat}
\end{subequations}
Furthermore,
\begin{subequations}
\label{eq:hdgp_pj_ell_eqn_conv}
\begin{align}
\|\bs\Pi^\mr{H+}\mb q-\mb q\|_K &\le Ch_K^{m_1}|\mb q|_{m_1,K},\\
\|\Pi^\mr{H+}u-u\|_K &\le Ch_K^{m_2}|u|_{m_2,K},\\
\|\tau^{-\frac{1}{2}}\delta_{\pm\tau}^{\Pi^\mr{H+}}(\mb q,u)\|_{\pp K}
&\le C\left(h_K^{m_1}|\mb q|_{m_1,K}+h_K^{m_2-1}|u|_{m_2,K}\right),
\end{align}
where $m_1\in[\frac{1}{2}+\epsilon,k+1]$ and $m_2\in[\frac{1}{2}+\epsilon,k+2]$. Here, the constant $C$ depends only on $k$, $\gamma_K$, and $c_2$.
\end{subequations}
\end{theorem}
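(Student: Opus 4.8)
The argument splits into three parts: well-posedness of the definition \eqref{eq:def_hdg+_ell}, the algebraic identities \eqref{eq:hdgp_pj_ell_eqn_1}--\eqref{eq:hdgp_pj_ell_eqn_3}, and the approximation bounds \eqref{eq:hdgp_pj_ell_eqn_conv}. For well-posedness the plan is to use the $L^2(K)$-orthogonal splitting $\mc P_k(K)^d=\nabla\mc P_{k+1}(K)\oplus(\nabla\mc P_{k+1}(K))^{\perp_k}$ (note $\nabla\mc P_{k+1}(K)\subset\mc P_k(K)^d$). Equation \eqref{eq:def_hdg+_ell_2} prescribes the $(\nabla\mc P_{k+1}(K))^{\perp_k}$-part of $\bs\Pi^\mr{H+}\mb q$ (it coincides there with $\bs\Pi_k\mb q$), while testing \eqref{eq:def_hdg+_ell_3} with $\nabla w$, $w\in\mc P_{k+1}(K)$, prescribes the $\nabla\mc P_{k+1}(K)$-part; the system is consistent because taking $w$ constant makes the left side vanish and the right side vanish too, since $1\in\mc R_k(\pp K)$ forces $\dualpr{\mr P_M(\mb q\cdot\mb n)-\mb q\cdot\mb n,1}_{\pp K}=0$. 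Uniqueness follows by setting $\mb q=0$: \eqref{eq:def_hdg+_ell_2} then forces $\bs\Pi^\mr{H+}0=\nabla\theta$ for some $\theta\in\mc P_{k+1}(K)$, and \eqref{eq:def_hdg+_ell_3} with $w=\theta$ gives $\|\nabla\theta\|_K=0$. The trace $\mb q\cdot\mb n|_{\pp K}\in L^2(\pp K)$ is meaningful for $\mb q\in H^{1/2+\epsilon}(K)^d$, so $\delta_{\pm\tau}^{\Pi^\mr{H+}}(\mb q,u)\in\mc R_k(\pp K)$ is well defined by \eqref{eq:def_hdg+_ell_4}.

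Identity \eqref{eq:hdgp_pj_ell_eqn_1} is immediate from $\Pi^\mr{H+}u=\Pi_{k+1}u$. For \eqref{eq:hdgp_pj_ell_eqn_2} I would subtract \eqref{eq:def_hdg+_ell_4} from the left side: the flux contributions collapse to $\dualpr{(\mr P_M-I)(\mb q\cdot\mb n),\mu}_{\pp K}=0$, and the $u$-contributions to $\pm\dualpr{(I-\mr P_M)(\Pi^\mr{H+}u-u),\tau\mu}_{\pp K}=0$, using self-adjointness of $\mr P_M$, that it reproduces $\mc R_k(\pp K)$, and that $\tau\mu\in\mc R_k(\pp K)$ because $\tau$ is face-wise constant. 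For \eqref{eq:hdgp_pj_ell_eqn_3} I would integrate by parts, reading $(\div(\bs\Pi^\mr{H+}\mb q-\mb q),w)_K$ as $-(\bs\Pi^\mr{H+}\mb q-\mb q,\nabla w)_K+\dualpr{(\bs\Pi^\mr{H+}\mb q-\mb q)\cdot\mb n,w}_{\pp K}$ (legitimate for $\mb q\in H^{1/2+\epsilon}(K)^d$), substitute \eqref{eq:def_hdg+_ell_3} into the volume term, and regroup; after adding $\pm\dualpr{\tau\mr P_M(\Pi^\mr{H+}u-u),w}_{\pp K}$ the result is exactly $\dualpr{\delta_{\pm\tau}^{\Pi^\mr{H+}}(\mb q,u),w}_{\pp K}$.

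The estimates are the substantive part. The key observation is that subtracting the defining relations $(\bs\Pi_k\mb q-\mb q,\mb r)_K=0$, $\mb r\in\mc P_k(K)^d$, of the $L^2$ projection from \eqref{eq:def_hdg+_ell_2}--\eqref{eq:def_hdg+_ell_3} shows $\bs\Pi^\mr{H+}\mb q=\bs\Pi_k\mb q+\nabla\theta$ with $\theta\in\mc P_{k+1}(K)$ satisfying $(\nabla\theta,\nabla w)_K=\dualpr{(\mr P_M-I)(\mb q\cdot\mb n),w}_{\pp K}$ for all $w\in\mc P_{k+1}(K)$. Taking $w=\theta$ and using $(\mr P_M-I)(\mb q\cdot\mb n)\perp\mc R_k(\pp K)$ yields $\|\nabla\theta\|_K^2=\dualpr{(\mr P_M-I)(\mb q\cdot\mb n),(I-\mr P_M)\theta}_{\pp K}$. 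I would bound the first factor by $Ch_K^{m_1-1/2}|\mb q|_{m_1,K}$: since $\mr P_M$ reproduces $\mb p\cdot\mb n$ for polynomial $\mb p$, it equals $(\mr P_M-I)((\mb q-\mb p)\cdot\mb n)$, and a Bramble--Hilbert approximant $\mb p\in\mc P_k(K)^d$ together with a scaled (fractional, when $m_1<1$) $L^2$ trace inequality on the star-shaped $K$ finishes it; I would bound the second factor by $Ch_K^{1/2}\|\nabla\theta\|_K$, since $I-\mr P_M$ annihilates constants gives $\|(I-\mr P_M)\theta\|_{\pp K}\le\|\theta-\bar\theta\|_{\pp K}$, to which the same trace inequality and a Poincaré inequality on $K$ apply. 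Combining gives $\|\nabla\theta\|_K\le Ch_K^{m_1}|\mb q|_{m_1,K}$, and the first bound in \eqref{eq:hdgp_pj_ell_eqn_conv} follows by the triangle inequality with the standard estimate for $\bs\Pi_k$; the second bound is the classical $L^2$-projection estimate for $\Pi_{k+1}$. For the boundary remainder I would split \eqref{eq:def_hdg+_ell_4}: its flux part equals $\mr P_M((\bs\Pi^\mr{H+}\mb q-\mb q)\cdot\mb n)$, hence is $\le\|\bs\Pi^\mr{H+}\mb q-\mb q\|_{\pp K}\le Ch_K^{m_1-1/2}|\mb q|_{m_1,K}$ in $L^2(\pp K)$ (split off a Bramble--Hilbert $\mb p$, apply a polynomial inverse trace inequality to $\bs\Pi^\mr{H+}\mb q-\mb p$, use the volume bound just proved), and the factor $\tau^{-1/2}$ absorbs the remaining half power; its $u$-part is $\|\tau^{1/2}\mr P_M(\Pi_{k+1}u-u)\|_{\pp K}$, bounded using $\tau\le c_2h_K^{-1}$ and $\|\Pi_{k+1}u-u\|_{\pp K}\le Ch_K^{m_2-1/2}|u|_{m_2,K}$ by $Ch_K^{m_2-1}|u|_{m_2,K}$.

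The main obstacle will not be any individual identity, but assembling the polyhedral toolbox with constants controlled only by $k$, the bounded number of faces, and $\gamma_K$: scaled $L^2$ trace inequalities, polynomial inverse trace inequalities, and Poincaré/Bramble--Hilbert estimates on a star-shaped polyhedron, together with the fractional-order versions needed at the endpoint $m_1,m_2\to\tfrac12+\epsilon$. Within this, the delicate point is the scaling bookkeeping in the estimate of $\|\nabla\theta\|_K$: the half power of $h_K$ lost to the trace when bounding $(\mr P_M-I)(\mb q\cdot\mb n)$ on $\pp K$ is recovered precisely because $I-\mr P_M$ kills $\mc R_k(\pp K)$, so the paired quantity $(I-\mr P_M)\theta$ gains an extra $h_K^{1/2}$, and it is this cancellation that produces the optimal order $h_K^{m_1}$ rather than $h_K^{m_1-1/2}$.
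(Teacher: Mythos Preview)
Your proposal is correct and follows essentially the same route as the paper: both subtract $\bs\Pi_k\mb q$ to reduce to $\bs\Pi^\mr{H+}\mb q-\bs\Pi_k\mb q=\nabla\theta\in\nabla\mc P_{k+1}(K)$, test \eqref{eq:def_hdg+_ell_3} with this potential, and recover the missing $h_K^{1/2}$ from the fact that $(\mr P_M-I)(\mb q\cdot\mb n)$ annihilates constants (the paper phrases this as the freedom to add $c=-\Pi_0 p$ to the potential, you phrase it as inserting $I-\mr P_M$ in front of $\theta$). The identities \eqref{eq:hdgp_pj_ell_eqn_2}--\eqref{eq:hdgp_pj_ell_eqn_3} and the remainder bound are handled identically; the polyhedral toolbox you flag is exactly the content of the paper's Lemma~\ref{lm:bs_inq}.
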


Note that in Theorem \ref{thm:hdgp_pj_ell}, equations \eqref{eq:hdgp_pj_ell_eqn} do not define the HDG+ projection. However, they are exactly what we need for the error analysis. Given a projection $\bs\Pi$, the boundary remainder operator $\delta_{\tau}^{\Pi}$ can be regarded as an indicator for a kind of ``conformity'' of the projection.
For instance, if $\bs\Pi$ is the classical HDG projection \cite{CoGoSa:2010}, then we have $\delta_\tau^{\Pi^\mr{HDG}}=0$. This can be easily obtained by using \cite[Eqn. (2.1c)]{CoGoSa:2010}. Similarly, we have $\delta_{\tau=0}^{\Pi^\mr{RT}}=0$ and $\delta_{\tau=0}^{\Pi^\mr{BDM}}=0$, where $\bs\Pi^\mr{RT}$ and $\bs\Pi^\mr{BDM}$ represent the Raviart-Thomas and the BDM projection, respectively.

The key idea behind the HDG+ projection is to find weaker but still sufficient conditions to carry out a projection-based error analysis. For the classical HDG projection, the boundary remainder is zero, and the equations that define the projection are also the equations that we use for the error analysis. However, these two properties are not necessary, especially if we want to extend the projection-based error analysis to more variants of HDG methods. Taking the HDG+ method as an example, the guideline for devising the projection now becomes the following: among all the projections that satisfy the equations \eqref{eq:hdgp_pj_ell_eqn}, find one such that its approximation property is optimal, and its boundary remainder is as small as possible. As we will see soon, there is no need to enforce the boundary remainder to be zero, which is the case the standard HDG projection. In fact, a small enough boundary remainder is sufficient for optimal convergence of the method.
In this way, we can devise HDG projections more flexibly, and generalize the classical projection-based error analysis of HDG methods \cite{CoGoSa:2010}.

\quad\\
{\bf Error estimates.} By using \eqref{eq:def_hdg+_ell}, we define the element-wise projections and the boundary remainder of the exact solutions $(\mb q,u)$ defined by \eqref{eq:ell_pde}:
\begin{alignat*}{5}
&\bs\Pi^\mr{H+}\mb q:=\prod_{K\in\mc T_h}\bs\Pi^\mr{H+}\mb q,&\quad&
\Pi^\mr{H+} u:=\prod_{K\in\mc T_h}\Pi^\mr{H+}u,&\quad&
\delta_\tau^{\Pi^\mr{H+}}(\mb q,u):=\prod_{K\in\mc T_h}\delta_\tau^{\Pi^\mr{H+}}(\mb q,u).
\end{alignat*}
We also define the norm $\|\cdot\|_h$ by $\|\mu\|_h^2:=\sum_{K\in\mc T_h}h_K\|\mu\|_{\pp K}^2$ for any $\mu\in L^2(\pp\mc T_h):=\prod_{K\in\mc T_h}L^2(\pp K)$.

\begin{theorem}\label{thm:est_ell}
For the HDG+ solution $(\mb q_h,u_h,\widehat{u}_h)$ defined by \eqref{eq:HDG_s} and the exact solution $(\mb q,u)$ defined by \eqref{eq:ell_pde}, we have
\begin{align}\label{eq:est_qh}
\|\bs\Pi^\mr{H+}\mb q-\mb q_h\|_{\mc T_h}
&\le C_1\left(
\|\bs\Pi^\mr{H+}\mb q-\mb q\|_{\mc T_h}+\|\tau^{-\frac{1}{2}}\delta_\tau^{\Pi^\mr{H+}}(\mb q,u)\|_{\pp\mc T_h}
\right).
\end{align}
If the regularity condition \eqref{eq:ell_reg} holds, then we have
\begin{align}\label{eq:est_uh}
\|\Pi^\mr{H+} u-u_h\|_{\mc T_h}\le
C_2\,h^{r_0}\left(
\|\bs\Pi^\mr{H+}\mb q-\mb q\|_{\mc T_h}+\|\tau^{-\frac{1}{2}}\delta_\tau^{\Pi^\mr{H+}}(\mb q,u)\|_{\pp\mc T_h}+Q_{k}
\right),\\
\label{eq:est_uhhat}
\|\mr P_Mu-\widehat{u}_h\|_h\le C_2\,h(1+\frac{h^{r_0}}{h_\mr{min}})\left(
\|\bs\Pi^\mr{H+}\mb q-\mb q\|_{\mc T_h}+\|\tau^{-\frac{1}{2}}\delta_\tau^{\Pi^\mr{H+}}(\mb q,u)\|_{\pp\mc T_h}
+Q_{k}
\right),
\end{align}
where $Q_{k}=0$ if $k\ge1$ and $Q_{k}=\|h_K^{\frac{1}{2}}(\bs\Pi_0\mb q-\mb q)\|_{\pp\mc T_h}$ if $k=0$.
Here, $C_1$ depends only on $\kappa$, and $C_2$ depends additionally on $k$, $\gamma_0$, and $C_\mr{reg}$. 
\end{theorem}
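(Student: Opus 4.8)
The plan is to run a projection-based error analysis in the spirit of \cite{CoGoSa:2010}, with the properties \eqref{eq:hdgp_pj_ell_eqn} of Theorem \ref{thm:hdgp_pj_ell} playing the role that the commuting identities of the classical HDG projection play there. Write $\bs\varepsilon_{\mb q}:=\bs\Pi^\mr{H+}\mb q-\mb q_h$, $\varepsilon_u:=\Pi^\mr{H+}u-u_h$, $\varepsilon_{\widehat u}:=\mr P_Mu-\widehat u_h$. I would first derive the error equations: integrating \eqref{eq:ell_pde} elementwise by parts gives the weak form satisfied by $(\mb q,u)$ on $\mc T_h$, and substituting $(\bs\Pi^\mr{H+}\mb q,\Pi^\mr{H+}u,\mr P_Mu)$ into \eqref{eq:HDG_s} one computes the residuals of this projected solution. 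Using \eqref{eq:hdgp_pj_ell_eqn_1} (to discard $\dualpr{\Pi^\mr{H+}u-u,\div\mb r}$ against $\div\mb r\in\mc P_{k-1}(K)$), the $L^2(\pp K)$-orthogonality of $\mr P_M$ (to discard $\dualpr{\mr P_Mu-u,\mb r\cdot\mb n}$, and, since $\tau$ is face-wise constant so $\tau\mu\in\mc R_k(\pp K)$ for $\mu\in M_h$, to pass freely between $u_h-\widehat u_h$ and $\mr P_M(u_h-\widehat u_h)$), together with \eqref{eq:hdgp_pj_ell_eqn_2}--\eqref{eq:hdgp_pj_ell_eqn_3}, the residuals reduce to $(\kappa^{-1}(\bs\Pi^\mr{H+}\mb q-\mb q),\cdot)_{\mc T_h}$ in the first scheme equation and to $\dualpr{\delta_\tau^{\Pi^\mr{H+}}(\mb q,u),\cdot}$ in the second and third, the fourth residual being zero. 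Subtracting \eqref{eq:HDG_s} yields error equations for $(\bs\varepsilon_{\mb q},\varepsilon_u,\varepsilon_{\widehat u})$ whose data involve only $\bs\Pi^\mr{H+}\mb q-\mb q$ and $\delta_\tau^{\Pi^\mr{H+}}(\mb q,u)$, plus $\dualpr{\varepsilon_{\widehat u},\mu}_\Gamma=0$.

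To prove \eqref{eq:est_qh} I would test these error equations with $\bs\varepsilon_{\mb q}$, $\varepsilon_u$ and $\varepsilon_{\widehat u}$ and add: the volume couplings cancel, the trace couplings cancel after using $\varepsilon_{\widehat u}|_\Gamma=0$, and the stabilization contributions combine (again using that $\tau$ is face-wise constant and $\mr P_M$ is a projection) into $\|\tau^{1/2}\mr P_M(\varepsilon_u-\varepsilon_{\widehat u})\|_{\pp\mc T_h}^2$, so that
\begin{align*}
&\|\kappa^{-1/2}\bs\varepsilon_{\mb q}\|_{\mc T_h}^2+\|\tau^{1/2}\mr P_M(\varepsilon_u-\varepsilon_{\widehat u})\|_{\pp\mc T_h}^2\\
&\qquad=(\kappa^{-1}(\bs\Pi^\mr{H+}\mb q-\mb q),\bs\varepsilon_{\mb q})_{\mc T_h}+\dualpr{\delta_\tau^{\Pi^\mr{H+}}(\mb q,u),\mr P_M(\varepsilon_u-\varepsilon_{\widehat u})}_{\pp\mc T_h}.
\end{align*}
Cauchy--Schwarz and Young's inequality then give \eqref{eq:est_qh}; since no inverse estimates enter, the constant depends only on $\kappa$. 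I would also keep the by-product $\|\tau^{1/2}\mr P_M(\varepsilon_u-\varepsilon_{\widehat u})\|_{\pp\mc T_h}\le C_1(\|\bs\Pi^\mr{H+}\mb q-\mb q\|_{\mc T_h}+\|\tau^{-1/2}\delta_\tau^{\Pi^\mr{H+}}(\mb q,u)\|_{\pp\mc T_h})$ for later use.

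For \eqref{eq:est_uh} I would use a duality argument. Let $(\bs\varphi,\phi)$ solve $\kappa^{-1}\bs\varphi+\nabla\phi=0$, $\div\bs\varphi=\varepsilon_u$ in $\Omega$, $\phi=0$ on $\Gamma$; by \eqref{eq:ell_reg}, $\|\bs\varphi\|_{r_0,\Omega}+\|\phi\|_{1+r_0,\Omega}\le C_\mr{reg}\|\varepsilon_u\|_{\mc T_h}$, and taking $\epsilon\le r_0-\tfrac12$ the projection $\bs\Pi^\mr{H+}$ acts on $(\bs\varphi,\phi)$. Starting from $\|\varepsilon_u\|_{\mc T_h}^2=(\varepsilon_u,\div\bs\varphi)_{\mc T_h}$, I would insert $\bs\Pi^\mr{H+}\bs\varphi$, use the first error equation for $(\varepsilon_u,\div\bs\Pi^\mr{H+}\bs\varphi)_{\mc T_h}$, treat $(\varepsilon_u,\div(\bs\varphi-\bs\Pi^\mr{H+}\bs\varphi))_{\mc T_h}$ by elementwise integration by parts and \eqref{eq:def_hdg+_ell_3} for $(\bs\varphi,\phi)$ (which brings in $\delta_{-\tau}^{\Pi^\mr{H+}}(\bs\varphi,\phi)$ and $\mr P_M(\Pi^\mr{H+}\phi-\phi)$), and rewrite the surviving term $(\kappa^{-1}\bs\varepsilon_{\mb q},\bs\varphi)_{\mc T_h}=(\div\bs\varepsilon_{\mb q},\Pi^\mr{H+}\phi)_{\mc T_h}-\dualpr{\bs\varepsilon_{\mb q}\cdot\mb n,\mr P_M\phi}_{\pp\mc T_h}$ using the second error equation (tested with $\Pi^\mr{H+}\phi$) and the third (tested with $\mr P_M\phi$, which vanishes on $\Gamma$). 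This leads to an identity of the form
\begin{align*}
\|\varepsilon_u\|_{\mc T_h}^2 &= (\kappa^{-1}\bs\varepsilon_{\mb q},\bs\Pi^\mr{H+}\bs\varphi-\bs\varphi)_{\mc T_h}-(\kappa^{-1}(\bs\Pi^\mr{H+}\mb q-\mb q),\bs\Pi^\mr{H+}\bs\varphi)_{\mc T_h}-\dualpr{\delta_{-\tau}^{\Pi^\mr{H+}}(\bs\varphi,\phi),\mr P_M(\varepsilon_u-\varepsilon_{\widehat u})}_{\pp\mc T_h}\\
&\quad{}+\dualpr{\delta_\tau^{\Pi^\mr{H+}}(\mb q,u),\mr P_M(\Pi^\mr{H+}\phi-\phi)}_{\pp\mc T_h}-2\dualpr{\tau\mr P_M(\varepsilon_u-\varepsilon_{\widehat u}),\mr P_M(\Pi^\mr{H+}\phi-\phi)}_{\pp\mc T_h}.
\end{align*}
By Cauchy--Schwarz, the convergence bounds of Theorem \ref{thm:hdgp_pj_ell} applied to $(\bs\varphi,\phi)$ with $m_1=r_0$, $m_2=1+r_0$, the regularity estimate, and the by-product of \eqref{eq:est_qh}, every summand here is bounded by $Ch^{r_0}\big(\|\bs\Pi^\mr{H+}\mb q-\mb q\|_{\mc T_h}+\|\tau^{-1/2}\delta_\tau^{\Pi^\mr{H+}}(\mb q,u)\|_{\pp\mc T_h}\big)\|\varepsilon_u\|_{\mc T_h}$ --- except for the part $(\bs\Pi^\mr{H+}\mb q-\mb q,\nabla\phi)_{\mc T_h}$ obtained by splitting $-(\kappa^{-1}(\bs\Pi^\mr{H+}\mb q-\mb q),\bs\varphi)_{\mc T_h}$ off the second summand.

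That term, $(\bs\Pi^\mr{H+}\mb q-\mb q,\nabla\phi)_{\mc T_h}$, is the only place the interior equations \eqref{eq:def_hdg+_ell_2}--\eqref{eq:def_hdg+_ell_3} are really exploited and where $Q_k$ is born, so I expect it to be the main difficulty. Splitting $\nabla\phi=(\nabla\phi-\bs\Pi_0\nabla\phi)+\bs\Pi_0\nabla\phi$, the first part contributes $\le Ch^{r_0}\|\bs\Pi^\mr{H+}\mb q-\mb q\|_{\mc T_h}\|\phi\|_{1+r_0,\Omega}$. For the second, $\bs\Pi_0\nabla\phi|_K=\nabla\ell_K$ with $\ell_K$ affine, so \eqref{eq:def_hdg+_ell_3} converts $(\bs\Pi^\mr{H+}\mb q-\mb q,\bs\Pi_0\nabla\phi)_{\mc T_h}$ into $\dualpr{\mr P_M(\mb q\cdot\mb n)-\mb q\cdot\mb n,\ell_K}_{\pp\mc T_h}$: when $k\ge1$ this vanishes because $\mr P_M(\mb q\cdot\mb n)-\mb q\cdot\mb n\perp\mc R_1(\pp K)\ni\ell_K|_{\pp K}$ (hence $Q_k=0$), and when $k=0$ one uses the single-valuedness of $\mb q\cdot\mb n$ on interior faces and $\phi|_\Gamma=0$ to replace $\ell_K$ by $\ell_K-\phi$, after which the elementwise-constant gradient error $\bs\Pi_0\nabla\phi-\nabla\phi$, controlled in $L^2(\pp K)$ by $h_K^{r_0-1/2}|\nabla\phi|_{r_0,K}$ via a fractional trace inequality, yields $\le Ch^{r_0}\|h_K^{1/2}(\bs\Pi_0\mb q-\mb q)\|_{\pp\mc T_h}\|\varepsilon_u\|_{\mc T_h}=Ch^{r_0}Q_0\|\varepsilon_u\|_{\mc T_h}$. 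Collecting, dividing by $\|\varepsilon_u\|_{\mc T_h}$ and invoking \eqref{eq:est_qh} gives \eqref{eq:est_uh}. Finally, for \eqref{eq:est_uhhat} I would write $\varepsilon_{\widehat u}=\mr P_M\varepsilon_u-\mr P_M(\varepsilon_u-\varepsilon_{\widehat u})$ on $\pp\mc T_h$; a discrete trace inequality gives $\|\mr P_M\varepsilon_u\|_h\le C\|\varepsilon_u\|_{\mc T_h}$, the weights give $\|\mr P_M(\varepsilon_u-\varepsilon_{\widehat u})\|_h\le Ch\,\|\tau^{1/2}\mr P_M(\varepsilon_u-\varepsilon_{\widehat u})\|_{\pp\mc T_h}$, and substituting \eqref{eq:est_uh} and the by-product of \eqref{eq:est_qh}, then bounding $h^{r_0}=h\cdot h^{r_0-1}\le h\cdot h^{r_0}/h_\mr{min}$, produces the factor $h(1+h^{r_0}/h_\mr{min})$.
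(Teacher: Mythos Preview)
Your proposal is correct and follows essentially the same projection-based approach as the paper: derive the error equations, obtain \eqref{eq:est_qh} from an energy identity, prove \eqref{eq:est_uh} by a duality argument, and isolate the term $(\bs\Pi^\mr{H+}\mb q-\mb q,\nabla\phi)_{\mc T_h}$ as the only place where $Q_k$ arises. The minor variations---the paper derives the duality identity \eqref{eq:dual_id} more symmetrically by testing both the error equations and the dual projection equations and comparing (which avoids your extra stabilization cross-term), uses $\Pi_1\phi$ rather than your $\ell_K$ with $\nabla\ell_K=\bs\Pi_0\nabla\phi$ in the $k=0$ case, and for \eqref{eq:est_uhhat} bounds $\|\tau^{1/2}\mr P_M\varepsilon_h^u\|_{\pp\mc T_h}\lesssim h_\mr{min}^{-1}\|\varepsilon_h^u\|_{\mc T_h}$ directly (so $h_\mr{min}$ appears naturally rather than by the loosening $h^{r_0}\le h\cdot h^{r_0}/h_\mr{min}$)---do not change the substance of the argument.
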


We make some remarks about Theorem \ref{thm:est_ell}.
\begin{itemize}
\item By \eqref{eq:est_qh} and \eqref{eq:hdgp_pj_ell_eqn_conv}, we know
$\|\mb q-\mb q_h\|_{\mc T_h}$ converges optimally in the sense that
\begin{align*}
\|\mb q-\mb q_h\|_{\mc T_h}\lesssim h^m(|\mb q|_{m,\mc T_h}+|u|_{m+1,\mc T_h})\quad \forall m\in[\frac{1}{2}+\epsilon,k+1].
\end{align*}
\item If \eqref{eq:ell_reg} holds, then by \eqref{eq:est_uh}, \eqref{eq:hdgp_pj_ell_eqn_conv}, and \eqref{eq:bs_inq_2} we have
\begin{align*}
\|u-u_h\|_{\mc T_h}\lesssim h^{m+r_0}(|\mb q|_{m,\mc T_h}+|u|_{m+1,\mc T_h})\quad \forall m\in[\frac{1}{2}+\epsilon,k+1].
\end{align*}
Specifically, if $r_0=1$, namely, the elliptic regularity holds, then $\|u-u_h\|_{\mc T_h}$ achieves optimal convergence.
Since the global system is only about $\widehat{u}_h\big|_F\in\mc P_k(F)$, it can be regarded that $u_h$ achieves superconvergence without post-processing in comparison to the standard HDG method, for which a post-processing is needed to achieve an additional order of convergence for $u_h$.

\end{itemize}

Theorem \ref{thm:est_ell} can be  proved by adopting a very similar analysis used in \cite{CoGoSa:2010}, combined with the HDG+ projection. We show how this is done in Section \ref{sec:prf_est}.

\subsection{Proof of Theorem \ref{thm:hdgp_pj_ell}}
\label{sec:ell_hdg+_pfs}
In this subsection, we prove Theorem \ref{thm:hdgp_pj_ell}. We begin by presenting a lemma that gives a collection of lifting/inverse inequalities and convergence properties about $L^2$ projections. These inequalities will be used extensively in the paper.

\begin{lemma}\label{lm:bs_inq}
If $u\in\mc P_k(K)$, then
\begin{subequations}\label{eq:bs_inq}
\begin{align}
\label{eq:bs_inq_1}
\|u\|_{\pp K}\le C h_K^{-\frac{1}{2}} \|u\|_K,\quad
\|\nabla u\|_K\le C h_K^{-1}\|u\|_K.
\end{align}
For $u\in H^{\frac{1}{2}+\epsilon}(K)$, we have
\begin{align}
\label{eq:bs_inq_2}
|\Pi_ku-u|_{m,K}\le C h_K^{s-m}|u|_{s,K},\quad
\|\Pi_ku-u\|_{\pp K}\le C h_K^{t-\frac{1}{2}}|u|_{t,K}.
\end{align}
\end{subequations}
Here, $s\in[0,k+1]$, $t\in[\frac{1}{2}+\epsilon,k+1]$, $m\in\{0,\min(1,s)\}$, and the constant $C$ depends only on $k$ and $\gamma_K$.
\end{lemma}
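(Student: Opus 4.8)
\textbf{Proof proposal for Lemma \ref{lm:bs_inq}.}

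The plan is to treat the three groups of estimates separately, since they are standard but require care because $K$ is a general star-shaped polyhedron rather than a simplex. For the inverse inequalities \eqref{eq:bs_inq_1}, I would use the simplex condition in the shape-regularity assumption: since $K$ admits a simplex decomposition into shape-regular simplices $\{T\}$ with $h_T \simeq h_K$, one has the classical scaled trace inequality $\|u\|_{\pp T}^2 \le C(h_T^{-1}\|u\|_T^2 + h_T|u|_{1,T}^2)$ on each reference-mapped simplex, and the classical inverse inequality $|u|_{1,T}\le C h_T^{-1}\|u\|_T$ for polynomials. Summing over the simplices in the decomposition (and noting each face of $K$ is a union of simplex faces, so $\|u\|_{\pp K}^2 \le \sum_T \|u\|_{\pp T}^2$) gives the first inequality; the gradient bound follows the same way, applying the polynomial inverse inequality on each $T$ and summing. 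The constant depends on $k$ (degree of $u$, entering the polynomial inverse inequalities) and on $\gamma_K$ (controlling the simplex aspect ratios and the number of simplices, which is tied to the bounded number of faces).

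For the interior approximation estimate in \eqref{eq:bs_inq_2}, namely $|\Pi_k u - u|_{m,K}\le C h_K^{s-m}|u|_{s,K}$, I would invoke the Bramble-Hilbert / Deny-Lions argument adapted to star-shaped domains. Since $K$ is star-shaped with respect to a ball of radius $\rho_K\simeq h_K$ (chunkiness condition), the averaged Taylor polynomial of degree $k$, call it $Q^k u$, is well defined and satisfies $|u - Q^k u|_{m,K}\le C h_K^{s-m}|u|_{s,K}$ for $0\le m\le s\le k+1$ with $C = C(d,k,\gamma_K)$ (this is the classical polynomial approximation estimate on chunky domains, e.g.\ Brenner-Scott). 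Because $\Pi_k$ is the $L^2$ projection onto $\mc P_k(K)$ and $Q^k u\in\mc P_k(K)$, we get $\|\Pi_k u - u\|_K \le \|Q^k u - u\|_K$ directly; for the $m=1$ bound I would write $|\Pi_k u - u|_{1,K}\le |\Pi_k(u - Q^k u)|_{1,K} + |Q^k u - u|_{1,K}$, bound the first term using the just-proved inverse inequality \eqref{eq:bs_inq_1} as $|\Pi_k(u-Q^k u)|_{1,K}\le C h_K^{-1}\|\Pi_k(u - Q^k u)\|_K \le C h_K^{-1}\|u - Q^k u\|_K$, and then apply the Bramble-Hilbert bound with $m=0$. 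The constraint $s\ge 1/2+\epsilon$ is only needed so that traces make sense; for the interior estimate $s\ge 0$ suffices, which is consistent with the stated range.

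For the trace approximation estimate $\|\Pi_k u - u\|_{\pp K}\le C h_K^{t-1/2}|u|_{t,K}$ with $t\in[1/2+\epsilon, k+1]$, I would combine the scaled multiplicative trace inequality on $K$ (again proved by summing over the simplex decomposition as above, now applied to the non-polynomial function $u - \Pi_k u$): $\|u - \Pi_k u\|_{\pp K}^2 \le C\bigl(h_K^{-1}\|u - \Pi_k u\|_K^2 + h_K|u - \Pi_k u|_{1,K}^2\bigr)$, and then substitute the two interior estimates already established — the $m=0$ bound with exponent $h_K^t$ and the $m=1$ bound with exponent $h_K^{t-1}$. Both terms then scale like $h_K^{2t-1}|u|_{t,K}^2$, giving the claim after taking square roots. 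One technical point: for non-integer $t$ the interior Bramble-Hilbert estimate must be stated in the fractional Sobolev seminorm, which is standard via interpolation between integer-order spaces on the chunky domain $K$; I would simply cite this.

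The main obstacle is the uniformity of constants with respect to the polyhedral geometry: unlike on a simplex, one cannot map to a single reference element, so every step has to be funneled through the simplex decomposition and the chunkiness ball, and one must check that the number of sub-simplices and their individual shape-regularity constants are all controlled by $\gamma_K$ together with the assumed bound on the number of faces of $K$. This bookkeeping — rather than any single hard inequality — is where the real work lies; each individual estimate on a shape-regular simplex is completely classical.
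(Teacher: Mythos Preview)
Your overall strategy matches the paper's: both routes obtain the inverse inequalities from the shape-regular simplex decomposition, the interior approximation from a Bramble--Hilbert/averaged-Taylor argument on the chunky domain, and then deduce the boundary approximation by combining a scaled trace inequality with the interior bounds. For $t\ge 1$ your argument is complete and essentially identical to what the cited references do.

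There is, however, a genuine gap in the low-regularity range $t\in[\tfrac12+\epsilon,1)$. The multiplicative trace inequality you invoke,
\[
\|\Pi_k u-u\|_{\pp K}^2 \le C\bigl(h_K^{-1}\|\Pi_k u-u\|_K^2 + h_K|\Pi_k u-u|_{1,K}^2\bigr),
\]
requires $\Pi_k u-u\in H^1(K)$, which fails when $u$ is only in $H^t(K)$ with $t<1$; correspondingly, the ``$m=1$ bound with exponent $h_K^{t-1}$'' you plan to substitute is not available --- note that the lemma itself only asserts $m\in\{0,\min(1,s)\}$, so for $s=t<1$ the admissible values are $m=0$ and $m=t$, not $m=1$. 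The paper treats precisely this case separately: it invokes a \emph{fractional} scaled trace inequality (from Ern--Guermond),
\[
\|\Pi_k u-u\|_{\pp K}\lesssim h_K^{-1/2}\|\Pi_k u-u\|_K + h_K^{t-1/2}|\Pi_k u-u|_{t,K},
\]
and then applies the interior estimate with $m=t$ (the $\min(1,s)$ case) to control the fractional seminorm on the right. Your closing remark about fractional norms addresses only the interior Bramble--Hilbert step, not the trace inequality itself; replacing the $H^1$ trace bound by its fractional analogue is the missing ingredient.
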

\begin{proof}
\eqref{eq:bs_inq_1} can be obtained by \cite[Lemma 1.28]{DiDr:2020} and \cite[Lemma 1.32]{DiDr:2020} (also using our assumption that the number of the faces for each element is bounded), \eqref{eq:bs_inq_2} with $t\ge1$ can be found in
\cite[Theorem 1.45]{DiDr:2020} (see also Remark 1.49). The second inequality of \eqref{eq:bs_inq_2} in the case of $\frac{1}{2}+\epsilon\le t<1$ can be obtained by applying \cite[Lemma 7.2]{ErGu:2017} to the term $\|\Pi_ku-u\|_{\pp K}$, which gives
\begin{align*}
\|\Pi_ku-u\|_{\pp K}\lesssim h_K^{-\frac{1}{2}}\|\Pi_k u-u\|_K+h_K^{t-\frac{1}{2}}|\Pi_ku-u|_{t,K},
\end{align*}
and then use \cite[Theorem 1.45]{DiDr:2020} again to estimate the right-hand-side terms.
\end{proof}

We next prove that the projection $\bs\Pi^\mr{H+}$ is well defined by \eqref{eq:def_hdg+_ell_2} and \eqref{eq:def_hdg+_ell_3}, and it converges optimally.

\begin{prop}\label{prop:g+}
The projection $\bs\Pi^\mr{H+}$ is well defined by \eqref{eq:def_hdg+_ell_2} and \eqref{eq:def_hdg+_ell_3}, and we have
\begin{align}\label{eq:g+_conv}
h_K^{\frac{1}{2}}\|\bs\Pi^\mr{H+}\mb q-\mb q\|_{\pp K}+\|\bs\Pi^\mr{H+}\mb q-\mb q\|_K\le 
Ch_K^m|\mb q|_{m,K},
\end{align}
where $m\in[\frac{1}{2}+\epsilon,k+1]$. Here, the constant $C$ depends only on $k$ and $\gamma_K$.
\end{prop}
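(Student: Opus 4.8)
\textbf{Proof proposal for Proposition \ref{prop:g+}.}

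The plan is to recast the defining relations \eqref{eq:def_hdg+_ell_2}--\eqref{eq:def_hdg+_ell_3} as a square linear system on $\mc P_k(K)^d$ and show it is uniquely solvable, then obtain the error bound by a standard ``project first, then correct'' argument. First I would observe that the two conditions together prescribe $\bs\Pi^\mr{H+}\mb q$ tested against $\big(\nabla\mc P_{k+1}(K)\big)^{\perp_k}$ and against $\nabla\mc P_{k+1}(K)$ (the latter with a boundary right-hand side depending only on the data $\mb q$); since $\mc P_k(K)^d = \nabla\mc P_{k+1}(K)\oplus\big(\nabla\mc P_{k+1}(K)\big)^{\perp_k}$, this is exactly $\dim\mc P_k(K)^d$ scalar equations in the same number of unknowns. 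Hence well-posedness reduces to injectivity of the homogeneous problem: if $\bs\Pi^\mr{H+}\mb q=0$ as a right-hand side, i.e.\ $\mb p\in\mc P_k(K)^d$ satisfies $(\mb p,\mb r)_K=0$ for all $\mb r\in\big(\nabla\mc P_{k+1}(K)\big)^{\perp_k}$ and $(\mb p,\nabla w)_K=0$ for all $w\in\mc P_{k+1}(K)$, then the first relation forces $\mb p\in\nabla\mc P_{k+1}(K)$ and the second, taking $\nabla w=\mb p$ (legitimate since $\mb p$ itself lies in $\nabla\mc P_{k+1}(K)$ — here one uses that $\nabla:\mc P_{k+1}(K)/\mbb R\to\nabla\mc P_{k+1}(K)$ is a bijection), gives $\|\mb p\|_K^2=0$, so $\mb p=0$. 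This yields existence and uniqueness.

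For the convergence estimate, write $\bs\Pi^\mr{H+}\mb q-\mb q=(\bs\Pi^\mr{H+}\mb q-\bs\Pi_k\mb q)+(\bs\Pi_k\mb q-\mb q)$. The second term is controlled by Lemma \ref{lm:bs_inq}: $\|\bs\Pi_k\mb q-\mb q\|_K\le Ch_K^m|\mb q|_{m,K}$ and $\|\bs\Pi_k\mb q-\mb q\|_{\pp K}\le Ch_K^{m-\frac12}|\mb q|_{m,K}$. For the first term, set $\bs\delta:=\bs\Pi^\mr{H+}\mb q-\bs\Pi_k\mb q\in\mc P_k(K)^d$ and decompose it orthogonally in $\mc P_k(K)^d$ as $\bs\delta=\bs\delta^\parallel+\bs\delta^\perp$ with $\bs\delta^\parallel\in\nabla\mc P_{k+1}(K)$ and $\bs\delta^\perp\in\big(\nabla\mc P_{k+1}(K)\big)^{\perp_k}$. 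Testing \eqref{eq:def_hdg+_ell_2} with $\mb r=\bs\delta^\perp$ and using that $\bs\Pi_k\mb q$ is the $L^2$ projection gives $\|\bs\delta^\perp\|_K^2=(\bs\Pi^\mr{H+}\mb q-\bs\Pi_k\mb q,\bs\delta^\perp)_K=(\mb q-\bs\Pi_k\mb q,\bs\delta^\perp)_K=0$, so $\bs\delta=\bs\delta^\parallel=\nabla w_0$ for some $w_0\in\mc P_{k+1}(K)$ (fixed mod constants). Testing \eqref{eq:def_hdg+_ell_3} with $w=w_0$ and again subtracting the $L^2$-projection identity $(\bs\Pi_k\mb q-\mb q,\nabla w_0)_K=0$ yields
\begin{align*}
\|\bs\delta\|_K^2=(\bs\Pi^\mr{H+}\mb q-\mb q,\nabla w_0)_K=\dualpr{\mr P_M(\mb q\cdot\mb n)-\mb q\cdot\mb n,\,w_0}_{\pp K}.
\end{align*}
Since $\mr P_M$ is the $L^2$ projection onto $\mc R_k(\pp K)$ and $\nabla w_0\cdot\mb n\big|_F\in\mc P_k(F)$, I can insert $\mr P_M$ freely and bound the right-hand side by $\|\mr P_M(\mb q\cdot\mb n)-\mb q\cdot\mb n\|_{\pp K}\,\|\mr P_M w_0-w_0\|_{\pp K}$ after subtracting $\mr P_M w_0$ (which pairs to zero against $\mr P_M(\mb q\cdot\mb n)-\mb q\cdot\mb n$? — more cleanly, replace $w_0$ by $w_0-$const and use orthogonality of the face projection). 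Estimating $\|\mr P_M(\mb q\cdot\mb n)-\mb q\cdot\mb n\|_{\pp K}\le\|\mb q\cdot\mb n-\bs\Pi_k\mb q\cdot\mb n\|_{\pp K}\le Ch_K^{m-\frac12}|\mb q|_{m,K}$ (using that $\mr P_M$ is an $L^2$-optimal approximation among $\mc P_k$ face polynomials) and, via Lemma \ref{lm:bs_inq} plus a trace/scaling inequality, $\|\mr P_M w_0-w_0\|_{\pp K}\le\|w_0-\mr P_0 w_0\|_{\pp K}\le Ch_K^{-1/2}\|\nabla w_0\|_K\cdot h_K^{?}$ — the precise chain being $\|w_0-\overline{w_0}\|_{\pp K}\le Ch_K^{-1/2}\|w_0-\overline{w_0}\|_K+Ch_K^{1/2}|w_0|_{1,K}\le Ch_K^{1/2}\|\nabla w_0\|_K=Ch_K^{1/2}\|\bs\delta\|_K$ by a Poincar\'e inequality on $K$ — gives $\|\bs\delta\|_K^2\le Ch_K^{m-\frac12}|\mb q|_{m,K}\cdot h_K^{1/2}\|\bs\delta\|_K$, hence $\|\bs\delta\|_K\le Ch_K^m|\mb q|_{m,K}$. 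The boundary part then follows from $\|\bs\delta\|_{\pp K}\le Ch_K^{-1/2}\|\bs\delta\|_K$ (inverse inequality \eqref{eq:bs_inq_1}), and combining with the $\bs\Pi_k\mb q-\mb q$ bounds gives \eqref{eq:g+_conv}.

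The main obstacle I anticipate is the clean tracking of scaling constants in the face term $\dualpr{\mr P_M(\mb q\cdot\mb n)-\mb q\cdot\mb n,w_0}_{\pp K}$: one must simultaneously (i) use the face-wise $L^2$-optimality of $\mr P_M$ to kill the best $\mc P_k$ approximation and produce the factor $h_K^{m-1/2}$, (ii) subtract an appropriate constant (or $\mr P_0 w_0$) from $w_0$ so that a Poincar\'e-type estimate converts $\|w_0-\text{const}\|_{\pp K}$ into $h_K^{1/2}\|\nabla w_0\|_K=h_K^{1/2}\|\bs\delta\|_K$, and (iii) ensure every inverse/trace inequality invoked is among those licensed by Lemma \ref{lm:bs_inq} with constants depending only on $k$ and $\gamma_K$ (in particular the Poincar\'e constant on the star-shaped element $K$, which depends on the chunkiness parameter). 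All the pieces are standard, but assembling them with the right powers of $h_K$ and with constants of the stated dependence is where the care lies; everything else is bookkeeping with the orthogonal splitting $\mc P_k(K)^d=\nabla\mc P_{k+1}(K)\oplus(\nabla\mc P_{k+1}(K))^{\perp_k}$ and the $L^2$-projection identities.
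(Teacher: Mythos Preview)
Your proposal is correct and follows essentially the same route as the paper: introduce $\bs\delta=\bs\Pi^\mr{H+}\mb q-\bs\Pi_k\mb q$, use \eqref{eq:def_hdg+_ell_2} and the orthogonal splitting to see $\bs\delta\in\nabla\mc P_{k+1}(K)$, test \eqref{eq:def_hdg+_ell_3} with $w_0$ satisfying $\nabla w_0=\bs\delta$, subtract a constant from $w_0$ and combine the discrete trace inequality with Poincar\'e to get $\|\bs\delta\|_K\lesssim h_K^{1/2}\|\mr P_M(\mb q\cdot\mb n)-\mb q\cdot\mb n\|_{\pp K}$, then finish with \eqref{eq:bs_inq_1} and \eqref{eq:bs_inq_2}. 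The only cosmetic difference is that the paper deduces unique solvability directly from the a~priori estimate (take $\mb q=0$) rather than checking injectivity separately, and it applies the polynomial trace inequality \eqref{eq:bs_inq_1} straight to $p-\Pi_0 p$ instead of going through a continuous trace bound; both are equivalent here.
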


\begin{proof}
In this proof, we use the sign `$\lesssim$' to hide a constant that depends only on $k$ and $\gamma_K$.  First note that \eqref{eq:def_hdg+_ell_2} and \eqref{eq:def_hdg+_ell_3} define a square system. We next prove the convergence equation \eqref{eq:g+_conv}, from which the unique solvability of \eqref{eq:def_hdg+_ell_2} and \eqref{eq:def_hdg+_ell_3} follows automatically.  Let $\bs\varepsilon_q:=\bs\Pi^\mr{H+}\mb q-\bs\Pi_k\mb q\in\mc P_k(K)^d$.
By \eqref{eq:def_hdg+_ell_2} and \eqref{eq:def_hdg+_ell_3}, we have
\begin{subequations}
\begin{alignat}{5}\label{eq:cv_p_0}
(\bs\varepsilon_q,\mb r)_K &=0 &\quad& \forall\mb r\in\big(\nabla\mc P_{k+1}(K)\big)^{\perp_k},\\
\label{eq:cv_p_3}
(\bs\varepsilon_q,\nabla w)_K&=\dualpr{\mr P_M(\mb q\cdot\mb n)-\mb q\cdot\mb n,w}_{\pp K}
 && \forall w\in \mc P_{k+1}(K).
\end{alignat}
\end{subequations}
We now decompose $\bs\varepsilon_q$ into the summation $\bs\varepsilon_q=\bs\varepsilon_q^1+\bs\varepsilon_q^2$, where $\bs\varepsilon_q^1\in\nabla\mc P_{k+1}(K)$ and $\bs\varepsilon_q^2\in\big(\nabla\mc P_{k+1}(K)\big)^{\perp_{k}}$.  By \eqref{eq:cv_p_0} we have
$\|\bs\varepsilon_q\|_K^2=(\bs\varepsilon_q,\bs\varepsilon_q^1)_K$.
Since $\bs\varepsilon_q^1\in\nabla\mc P_{k+1}(K)$, we can write $\bs\varepsilon_q^1=\nabla(p+c)$ for some $p\in\mc P_{k+1}(K)$ and arbitrary constant $c$. This with \eqref{eq:cv_p_3} gives
\begin{align*}
\|\bs\varepsilon_q\|_K^2&=(\bs\varepsilon_q,\nabla(p+c))_K=\dualpr{\mr P_M(\mb q\cdot\mb n)-\mb q\cdot\mb n,p+c}_{\pp K}\\
&\lesssim h_K^{-\frac{1}{2}}\|\mr P_M(\mb q\cdot\mb n)-\mb q\cdot\mb n\|_{\pp K}\|p+c\|_{K}.
\end{align*}
We now choose the constant $c=-\Pi_0 p$ and obtain
\begin{align*}
\|\bs\varepsilon_q\|_K^2\lesssim h_K^{\frac{1}{2}}\|\mr P_M(\mb q\cdot\mb n)-\mb q\cdot\mb n\|_{\pp K}\|\nabla p\|_K\le h_K^{\frac{1}{2}}\|\mr P_M(\mb q\cdot\mb n)-\mb q\cdot\mb n\|_{\pp K}\|\bs\varepsilon_q\|_K,
\end{align*}
by which the estimate of the volumetric term $\|\bs\Pi^\mr{H+}\mb q-\mb q\|_K$ in \eqref{eq:g+_conv} is obtained. For the boundary term $\|\bs\Pi^\mr{H+}\mb q-\mb q\|_{\pp K}$, note that
\begin{align*}
\|\bs\Pi^\mr{H+}\mb q-\mb q\|_{\pp K}\le \|\bs\varepsilon_q\|_{\pp K}+\|\bs\Pi_k\mb q-\mb q\|_{\pp K}\lesssim h_K^{-\frac{1}{2}}\|\bs\varepsilon_q\|_K+\|\bs\Pi_k\mb q-\mb q\|_{\pp K},
\end{align*}
where we have used \eqref{eq:bs_inq_1} for the second inequality sign.
We next use \eqref{eq:bs_inq_2} to estimate $\|\bs\Pi_k\mb q-\mb q\|_{\pp K}$. This completes the proof.

\end{proof}

We are now ready to prove Theorem \ref{thm:hdgp_pj_ell}.
By Proposition \ref{prop:g+}, we know  $\bs\Pi^\mr{H+}$ and $\delta_{\pm\tau}^{\Pi^\mr{H+}}$ are well defined.
We next prove that $\bs\Pi^\mr{H+}$ satisfies equations \eqref{eq:hdgp_pj_ell_eqn}.
Equation \eqref{eq:hdgp_pj_ell_eqn_1} holds obviously since $\Pi^\mr{H+} u=\Pi_{k+1}u$. Equation \eqref{eq:hdgp_pj_ell_eqn_2} holds by the definition \eqref{eq:def_hdg+_ell_4}.
To prove \eqref{eq:hdgp_pj_ell_eqn_3}, first note that
\begin{align}
\label{eq:cv_p_9}
&(\div(\bs\Pi^\mr{H+}\mb q-\mb q),w)_K
\pm\dualpr{\tau\mr P_M(\Pi_{k+1}u-u),w}_{\pp K}\\
\nonumber
&\qquad=\dualpr{(\bs\Pi^\mr{H+}\mb q-\mb q)\cdot\mb n\pm\tau\mr P_M(\Pi_{k+1}u-u),w}_{\pp K}
-(\bs\Pi^\mr{H+}\mb q-\mb q,\nabla w)_K,
\end{align}
for all $w\in\mc P_{k+1}(K)$.
Now \eqref{eq:hdgp_pj_ell_eqn_3} follows by using \eqref{eq:cv_p_9} and \eqref{eq:def_hdg+_ell_3}.

We next prove \eqref{eq:hdgp_pj_ell_eqn_conv}.
By \eqref{eq:bs_inq_2} and \eqref{eq:g+_conv}, we know that $\bs\Pi^\mr{H+}\mb q$ and $\Pi^\mr{H+} u=\Pi_{k+1}u$ converge optimally. It only remains to estimate the boundary remainder. By the definition \eqref{eq:def_hdg+_ell_4} and the fact that $\|\tau\|_{L^\infty(\pp K)}\le c_2h_K^{-1}$, we have
\begin{align*}
\|\delta_{\pm\tau}^{\Pi^\mr{H+}}(\mb q,u)\|_{\pp K}\le \|\mr P_M(\bs\Pi^\mr{H+}\mb q\cdot\mb n-\mb q\cdot\mb n)\|_{\pp K}+c_2h_K^{-1}\|\mr P_M(\Pi_{k+1}u-u)\|_{\pp K}.
\end{align*}
By \eqref{eq:bs_inq_2} and \eqref{eq:g+_conv} again, we complete the proof.

\subsection{Proof of Theorem \ref{thm:est_ell}}\label{sec:prf_est}
In this subsection, we give a step-by-step proof for Theorem \ref{thm:est_ell}. The proof will be very similar to those used in \cite{CoGoSa:2010}, thanks to the introduction of the HDG+ projection. In this way, we are able to reuse the existing projection-based error analysis for the analysis of the HDG+ method.

\quad\\
\noindent
{\bf Step 1: Error equations.} We first define the error terms:
\begin{alignat*}{5}
\bs\varepsilon_h^q:=\bs\Pi^\mr{H+}\mb q-\mb q_h\in\mb V_h,\quad \varepsilon_h^u:=\Pi^\mr{H+} u-u_h\in W_h,\quad \widehat{\varepsilon}_h^{\,u}:=\mr P_Mu-\widehat{u}_h\in M_h. 
\end{alignat*}
Now, by testing \eqref{eq:ell_pde} with $(\mb r,w,\mu)\in\mb V_h\times W_h\times M_h$ and then using \eqref{eq:hdgp_pj_ell_eqn},
we obtain the projection equations:
\begin{subequations}\label{eq:prj_eqns}
\begin{align}
\label{eq:prj_eqns_1}
(\kappa^{-1}\bs\Pi^\mr{H+}\mb q,\mb r)_{\mc T_h} - (\Pi^\mr{H+} u,\div\mb r)_{\mc T_h} + \dualpr{\mr P_Mu,\mb r\cdot\mb n}_{\pp\mc T_h} 
&= (\kappa^{-1}(\bs\Pi^\mr{H+}\mb q-\mb q),\mb r)_{\mc T_h},\\
\label{eq:prj_eqns_2}
(\div\bs\Pi^\mr{H+}\mb q,w)_{\mc T_h} + \dualpr{\tau\mr P_M(\Pi^\mr{H+} u-\mr P_Mu),w}_{\pp\mc T_h}
&=(f,w)_{\mc T_h}\\
\nonumber
&\quad\, +\dualpr{\delta_\tau^{\Pi^\mr{H+}}(\mb q,u), w}_{\pp\mc T_h},\\
\label{eq:prj_eqns_3}
-\dualpr{\bs\Pi^\mr{H+}\mb q\cdot\mb n+\tau(\Pi^\mr{H+} u-u),\mu}_{\pp\mc T_h\backslash\Gamma} &= -\dualpr{\delta_\tau^{\Pi^\mr{H+}}(\mb q,u),\mu}_{\pp\mc T_h\backslash\Gamma},\\
\label{eq:prj_eqns_4}
\dualpr{\mr P_Mu,\mu}_{\Gamma} &= \dualpr{g,\mu}_\Gamma,
\end{align}
for all $(\mb r,w,\mu)\in\mb V_h\times W_h\times M_h$.
\end{subequations}
In the above equations, \eqref{eq:prj_eqns_1}, \eqref{eq:prj_eqns_2}, and \eqref{eq:prj_eqns_3} are obtained by using \eqref{eq:hdgp_pj_ell_eqn_1}, \eqref{eq:hdgp_pj_ell_eqn_2}, and \eqref{eq:hdgp_pj_ell_eqn_3}, respectively. The equation \eqref{eq:prj_eqns_4} holds obviously since $\mr P_M\big|_{\pp K}$ is the $L^2$ projection to $\mc R_k(\pp K)$ for all $K\in\mc T_h$.

By taking the difference between \eqref{eq:prj_eqns} and \eqref{eq:HDG_s}, we obtain the error equations:
\begin{subequations}\label{eq:err_eqns}
\begin{align}
\label{eq:err_eqns_1}
(\kappa^{-1}\bs\varepsilon_h^q,\mb r)_{\mc T_h} - (\varepsilon_h^u,\div\mb r)_{\mc T_h} + \dualpr{\widehat{\varepsilon}_h^{\,u},\mb r\cdot\mb n}_{\pp\mc T_h} &= (\kappa^{-1}(\bs\Pi^\mr{H+}\mb q-\mb q),\mb r)_{\mc T_h},\\
\label{eq:err_eqns_2}
(\div\bs\varepsilon_h^q,w)_{\mc T_h} + \dualpr{\tau\mr P_M(\varepsilon_h^u-\widehat{\varepsilon}_h^{\,u}),w}_{\pp\mc T_h}
&=\dualpr{\delta_\tau(\mb q,u), w}_{\pp\mc T_h},\\
\label{eq:err_eqns_3}
-\dualpr{\bs\varepsilon_h^q\cdot\mb n+\tau(\varepsilon_h^u-\widehat{\varepsilon}_h^{\,u}),\mu}_{\pp\mc T_h\backslash\Gamma} &= -\dualpr{\delta_\tau(\mb q,u),\mu}_{\pp\mc T_h\backslash\Gamma},\\
\label{eq:err_eqns_4}
\dualpr{\widehat{\varepsilon}_h^{\,u},\mu}_{\Gamma} &= 0,
\end{align}
for all $(\mb r,w,\mu)\in\mb V_h\times W_h\times M_h$.
\end{subequations}

\quad\\
\noindent{\bf Step 2: Energy identity.}
By testing the error equations
with $\mb r=\bs\varepsilon_h^q$, $w=\varepsilon_h^u$, $\mu=\widehat{\varepsilon}_h^{\,u}$ in \eqref{eq:err_eqns_1}-\eqref{eq:err_eqns_3} and adding the equations, then using \eqref{eq:err_eqns_4}, which suggests that $\widehat{\varepsilon}_h^{\,u}\big|_\Gamma=0$, we obtain the following energy identity:
\begin{align}
\label{eq:ene_id}
&(\kappa^{-1}\bs\varepsilon_h^q,\bs\varepsilon_h^q)_{\mc T_h}+\dualpr{\tau\mr P_M(\varepsilon_h^u-\widehat{\varepsilon}_h^{\,u}),\varepsilon_h^u-\widehat{\varepsilon}_h^{\,u}}_{\pp\mc T_h}\\
\nonumber
&\qquad\qquad=(\kappa^{-1}(\bs\Pi^\mr{H+}\mb q-\mb q),\bs\varepsilon_h^q)_{\mc T_h}+\dualpr{\delta_\tau^{\Pi^\mr{H+}}(\mb q,u),\varepsilon_h^u-\widehat{\varepsilon}_h^{\,u}}_{\pp\mc T_h}.
\end{align}
By using the energy identity \eqref{eq:ene_id}, we easily obtain 
\begin{align}\label{eq:est_qh_jump}
\|\kappa^{-\frac{1}{2}}\bs\varepsilon_h^q\|_{\mc T_h}^2+\|\tau^{\frac{1}{2}}\mr P_M(\varepsilon_h^u-\widehat{\varepsilon}_h^{\,u})\|_{\pp\mc T_h}^2
\le \|\kappa^{-\frac{1}{2}}(\bs\Pi^\mr{H+}\mb q-\mb q)\|_{\mc T_h}^2
+\|\tau^{-\frac{1}{2}}\delta_\tau^{\Pi^\mr{H+}}(\mb q,u)\|_{\pp\mc T_h}^2.
\end{align}
This proves \eqref{eq:est_qh}. We are next going to prove \eqref{eq:est_uh} and \eqref{eq:est_uhhat}.

\quad\\
\noindent{\bf Step 3: Duality identity.} 
We first introduce the duality equations of \eqref{eq:ell_pde}:
\begin{subequations}\label{eq:dual_pde}
\begin{alignat}{5}
\kappa^{-1}\bs\psi - \nabla \phi &= \mb 0 &\qquad& \mr{in}\ \Omega,\\
-\div\bs\psi & = \theta&&\mr{in}\ \Omega,\\
\phi & =0 &&\mr{on}\ \Gamma,
\end{alignat}
\end{subequations}
We next define the projections and the boundary remainder of the solutions of the duality equations \eqref{eq:dual_pde}:
\begin{alignat*}{5}
\bs\Pi^\mr{H+}\bs\psi:=\prod_{K\in\mc T_h}\bs\Pi^\mr{H+}\bs\psi,\quad
\Pi^\mr{H+}\phi:=\prod_{K\in\mc T_h}\Pi^\mr{H+}\phi,\quad
\delta_{-\tau}^{\Pi^\mr{H+}}(\bs\psi,\phi):=\prod_{K\in\mc T_h}\delta_{-\tau}^{\Pi^\mr{H+}}(\bs\psi,\phi).
\end{alignat*}
Note that we used $-\tau$ to define the boundary remainder.
By testing \eqref{eq:dual_pde} with $(\mb r,w,\mu)\in\mb V_h\times W_h\times M_h$ and then using \eqref{eq:hdgp_pj_ell_eqn},
we obtain the following equations in a similar way we obtained \eqref{eq:prj_eqns}:
\begin{subequations}\label{eq:dual_pj_eqns}
\begin{align}
\label{eq:dual_pj_eqns_1}
(\kappa^{-1}\bs\Pi^\mr{H+}\bs\psi,\mb r)_{\mc T_h} + (\Pi^\mr{H+}\phi,\div\mb r)_{\mc T_h} - \dualpr{\mr P_M\phi,\mb r\cdot\mb n}_{\pp\mc T_h} &= (\kappa^{-1}(\bs\Pi^\mr{H+}\bs\psi-\bs\psi),\mb r)_{\mc T_h},\\
\label{eq:dual_pj_eqns_2}
-(\div\bs\Pi^\mr{H+}\bs\psi,w)_{\mc T_h} + \dualpr{\tau\mr P_M(\Pi^\mr{H+}\phi-\mr P_M\phi),w}_{\pp\mc T_h}
&=(\theta,w)_{\mc T_h}\\
\nonumber
&\quad\, -\dualpr{\delta_{-\tau}^{\Pi^\mr{H+}}(\bs\psi,\phi), w}_{\pp\mc T_h},\\
\label{eq:dual_pj_eqns_3}
\dualpr{\bs\Pi^\mr{H+}\bs\psi\cdot\mb n-\tau(\Pi^\mr{H+}\phi-\phi),\mu}_{\pp\mc T_h\backslash\Gamma} &= \dualpr{\delta_{-\tau}^{\Pi^\mr{H+}}(\bs\psi,\phi),\mu}_{\pp\mc T_h\backslash\Gamma},\\
\label{eq:dual_pj_eqns_4}
\dualpr{\mr P_M\phi,\mu}_{\Gamma} &= 0,
\end{align}
\end{subequations}
for all $(\mb r,w,\mu)\in\mb V_h\times W_h\times M_h$. 
Now we test \eqref{eq:err_eqns_1}-\eqref{eq:err_eqns_3} with 
$\mb r=\bs\Pi^\mr{H+}\psi$, $w=\Pi^\mr{H+}\phi$, $\mu=\mr P_M\phi$, test \eqref{eq:dual_pj_eqns_1}-\eqref{eq:dual_pj_eqns_3} with $\mb r=\bs\varepsilon_h^q$, $w=\varepsilon_h^u$, $\mu=\widehat{\varepsilon}_h^{\,u}$, and use \eqref{eq:err_eqns_4} and \eqref{eq:dual_pj_eqns_4}, which imply $\widehat{\varepsilon}_h^{\,u}\big|_\Gamma=\mr P_M\phi\big|_\Gamma=0$. 
Comparing the two sets of equations, we obtain
\begin{align*}
&(\kappa^{-1}(\bs\Pi^\mr{H+}\mb q-\mb q),\bs\Pi^\mr{H+}\bs\psi)_{\mc T_h}+\dualpr{\delta_\tau^{\Pi^\mr{H+}}(\mb q,u),\Pi^\mr{H+}\phi-\mr P_M\phi}_{\pp\mc T_h}\\
&\qquad =(\kappa^{-1}(\bs\Pi^\mr{H+}\bs\psi-\bs\psi),\bs\varepsilon_h^q)_{\mc T_h}+(\theta,\varepsilon_h^u)_{\mc T_h}
-\dualpr{\delta_{-\tau}^{\Pi^\mr{H+}}(\bs\psi,\phi),\varepsilon_h^u-\widehat{\varepsilon}_h^{\,u}}_{\pp\mc T_h}.
\end{align*}
Rearranging the terms of the above identity, we have the following duality identity:
\begin{align}
\label{eq:dual_id}
(\theta,\varepsilon_h^u)_{\mc T_h}
&=(\bs\Pi^\mr{H+}\mb q-\mb q,\nabla\phi)_{\mc T_h}
+(\kappa^{-1}(\bs\Pi^\mr{H+}\bs\psi-\bs\psi),\mb q_h-\mb q)_{\mc T_h}\\
\nonumber
&\quad +\dualpr{\delta_\tau^{\Pi^\mr{H+}}(\mb q,u),\mr P_M\Pi^\mr{H+} \phi-\mr P_M\phi}_{\pp\mc T_h}
+\dualpr{\delta_{-\tau}^{\Pi^\mr{H+}}(\bs\psi,\phi),\mr P_M\varepsilon_h^u-\widehat{\varepsilon}_h^{\,u}}_{\pp\mc T_h}.
\end{align}

\quad\\
\noindent{\bf Step 4: Estimating $u_h$ and $\widehat{u}_h$.} 
We first consider the case when $k\ge1$. Then $(\bs\Pi^\mr{H+}\mb q-\mb q,\bs\Pi_0\nabla\phi)_{\mc T_h}=0$ because of \eqref{eq:def_hdg+_ell_3} (taking $w\in\mc P_1(K)$) and the assumption $k\ge1$.
By \eqref{eq:hdgp_pj_ell_eqn_conv} with $m_1=r_0$ and $m_2=1+r_0$, \eqref{eq:bs_inq_2} with $s=r_0$, and then using \eqref{eq:ell_reg}, we have
\begin{align*}
&\|\nabla\phi-\bs\Pi_0\nabla\phi\|_{\mc T_h}+\|\bs\Pi^\mr{H+}\bs\psi-\bs\psi\|_{\mc T_h}\\
&+\|\tau^{\frac{1}{2}}(\mr P_M\Pi^\mr{H+}\phi-\mr P_M\phi)\|_{\pp\mc T_h}+\|\tau^{-\frac{1}{2}}\delta_{-\tau}^{\Pi^\mr{H+}}(\bs\psi,\phi)\|_{\pp\mc T_h}\\
&\qquad\qquad\lesssim h^{r_0}(|\bs\psi|_{r_0,\Omega}+|\phi|_{1+r_0,\Omega})\lesssim h^{r_0}\|\theta\|_{\mc T_h}.
\end{align*}
Taking $\theta=\varepsilon_h^u$ in \eqref{eq:dual_id}, we have
\begin{align*}
\|\varepsilon_h^u\|_{\mc T_h}\lesssim h^{r_0}\big(&\|\bs\Pi^\mr{H+}\mb q-\mb q\|_{\mc T_h}+\|\mb q_h-\mb q\|_{\mc T_h}\\
&+\|\tau^{-\frac{1}{2}}\delta_\tau^{\Pi^\mr{H+}}(\mb q,u)\|_{\pp\mc T_h}+\|\tau^{\frac{1}{2}}(\mr P_M\varepsilon_h^u-\widehat{\varepsilon}_h^{\,u})\|_{\pp\mc T_h}\big).
\end{align*}
The above inequality with \eqref{eq:est_qh_jump} implies \eqref{eq:est_uh}.

We now consider the case when $k=0$. The only term we need to take into special consideration is $(\bs\Pi^\mr{H+}\mb q-\mb q,\nabla\phi)_{\mc T_h}$. We first rewrite it as follows:
\begin{align*}
(\bs\Pi^\mr{H+}\mb q-\mb q,\nabla\phi)_{\mc T_h}
=(\bs\Pi^\mr{H+}\mb q-\mb q,\nabla\phi-\nabla(\Pi_1\phi))_{\mc T_h}
+(\bs\Pi^\mr{H+}\mb q-\mb q,\nabla(\Pi_1\phi))_{\mc T_h}.
\end{align*}
By \eqref{eq:bs_inq_2} with $m=1$ and $s=2$, the first term of the above equation can be handled similarly as in the case $k\ge1$. We next focus on the second term. By \eqref{eq:def_hdg+_ell_3}, we have
\begin{align*}
(\bs\Pi^\mr{H+}\mb q-\mb q,\nabla(\Pi_1\phi))_{\mc T_h}
=\dualpr{\mr P_M(\mb q\cdot\mb n)-\mb q\cdot\mb n,\Pi_1\phi}_{\pp\mc T_h},
\end{align*}
where again, $\mr P_M$ is the $L^2$ projection to $\prod_{K\in\mc T_h}\mc R_0(\pp K)$. Let $\mb P_0^F$ be the $L^2$ projection to $\mc P_0(F)^d$, and $\mc E_h^\Gamma$ and $\mc E_h^\circ$ be the collections of the boundary and the interior faces of $\mc E_h$, respectively. Then we have
\begin{align*}
\dualpr{\mr P_M(\mb q\cdot\mb n)-\mb q\cdot\mb n,\Pi_1\phi}_{\pp\mc T_h}
&=\sum_{F\in\mc E_h^\Gamma}\dualpr{\mb P_0^F\mb q-\mb q,(\Pi_1\phi-\phi)\mb n_F}_{F}\\
&\quad+\sum_{F\in\mc E_h^\circ}\dualpr{\mb P_0^F\mb q-\mb q,\Pi_1\phi(\mb n_F^++\mb n_F^-)}_{F},
\end{align*}
where we have used the fact that $\phi\big|_\Gamma=0$ and $\mr P_M(\mb q\cdot\mb n)\big|_F = (\mb P_0^F\mb q)\cdot\mb n$ (noticing $\dualpr{\mr P_M(\mb q\cdot\mb n),\mu}_{F}=\dualpr{\mb q,\mu\mb n}_{F}=\dualpr{\mb P_0^F\mb q,\mu\mb n}_{F}=\dualpr{\mb P_0^F\mb q\cdot\mb n,\mu}_{F}$ for all $\mu\in\mc P_0(F)$). Hence
\begin{align*}
|\dualpr{\mr P_M(\mb q\cdot\mb n)-\mb q\cdot\mb n,\Pi_1\phi}_{\pp\mc T_h}|
&\le 2 \sum_{F\in\mc E_h}\|\mb P_0^F\mb q-\mb q\|_F\|\Pi_1\phi-\phi\|_F\\
&\le 2 \sum_{K\in\mc T_h}\|\bs\Pi_0\mb q-\mb q\|_{\pp K}
\|\Pi_1\phi-\phi\|_{\pp K}\\
&\lesssim h^{r_0}\|h_K^{\frac{1}{2}}(\bs\Pi_0\mb q-\mb q)\|_{\pp\mc T_h}|\phi|_{1+r_0,\Omega}.
\end{align*}
The rest is similar to the case when $k\ge1$.

It now only remains to estimate the term $\|\mr P_Mu-\widehat{u}_h\|_h$. 
First note that 
\begin{align}
\label{eq:prf_15}
\|\widehat{\varepsilon}_h^{\,u}\|_h^2=\sum_{K\in\mc T_h}h_K\|\widehat{\varepsilon}_h^{\,u}\|_{\pp K}^2
\approx\sum_{K\in\mc T_h}h_K^2\|\tau^{\frac{1}{2}}\widehat{\varepsilon}_h^{\,u}\|_{\pp K}^2
\le h^2\|\tau^{\frac{1}{2}}\widehat{\varepsilon}_h^{\,u}\|_{\pp\mc T_h}^2.
\end{align}
By \eqref{eq:est_qh_jump}, we have
\begin{align}\label{eq:prf_11}
\|\tau^{\frac{1}{2}}\widehat{\varepsilon}_h^{\,u}\|_{\pp\mc T_h}\lesssim \|\tau^{\frac{1}{2}}\mr P_M\varepsilon_h^u\|_{\pp\mc T_h}+\|\bs\Pi^\mr{H+}\mb q-\mb q\|_{\mc T_h}+\|\tau^{-\frac{1}{2}}\delta_\tau^{\Pi^\mr{H+}}(\mb q,u)\|_{\pp\mc T_h}.
\end{align}
By using \eqref{eq:est_uh}, we can estimate the term $\|\tau^{\frac{1}{2}}\mr P_M\varepsilon_h^u\|_{\pp\mc T_h}$ as follows:
\begin{align}
\label{eq:prf_13}
\|\tau^{\frac{1}{2}}\mr P_M\varepsilon_h^u\|_{\pp\mc T_h}^2
&=\sum_{K\in\mc T_h}\|\tau^{\frac{1}{2}}\mr P_M(\Pi^\mr{H+} u-u_h)\|_{\pp K}^2
\lesssim \sum_{K\in\mc T_h}h_K^{-2}\|\Pi^\mr{H+} u-u_h\|_{K}^2\\
\nonumber
&\lesssim h_\mr{min}^{-2}h^{2r_0}\left(
\|\bs\Pi^\mr{H+}\mb q-\mb q\|_{\mc T_h}
+\|\tau^{-\frac{1}{2}}\delta_\tau^{\Pi^\mr{H+}}(\mb q,u)\|_{\pp\mc T_h}
\right)^2.
\end{align}
Combining \eqref{eq:prf_15}, \eqref{eq:prf_11}, and \eqref{eq:prf_13}, we obtain \eqref{eq:est_uhhat}. This completes the proof.

\section{The projection for elasticity}
\label{sec:hdgp_pj_elas}
\subsection{Main results}
In \cite{DuSa_elas:2020}, we devised the HDG+ projection for elasticity on simplicial elements.
In this section, we extend the projection (see \cite[Theorem 2.1]{DuSa_elas:2020}) to polyhedral elements. This new projection will render all the analysis and estimates in \cite[Sections 5,6\&7]{DuSa_elas:2020} valid for general polyhedral meshes. (The three sections in \cite{DuSa_elas:2020} cover the error analysis of the HDG+ methods for steady-state elasticity, time-harmonic elastodynamics, and transient elastic waves, respectively.)

\begin{subequations}\label{eq:def_hdg+_elas}
For each $K\in\mc T_h$, we define the HDG+ projection for elasticity as follows:
\begin{align*}
\bs\Pi^\mr{H+}: H^{\frac{1}{2}+\epsilon}(K;\mbb R_\mr{sym}^{d\times d})
\times H^{\frac{1}{2}+\epsilon}(K;\mbb R^{d})
&\rightarrow
\mc P_k(K;\mbb R_\mr{sym}^{d\times d})\times\mc P_{k+1}(K;\mbb R^d),\\
(\mb q,u)&\mapsto (\bs\Pi^\mr{H+}\bs\sigma,\bs\Pi^\mr{H+}\mb u),
\end{align*}
where the first component $\bs\Pi^\mr{H+}$ is defined by solving
\begin{alignat}{5}
\label{eq:def_hdg+_elas_2}
(\bs\Pi^\mr{H+}\bs\sigma-\bs\sigma,\bs\theta)_K &=0 &\quad& \forall\bs\theta\in\bs\varepsilon(\mc P_{k+1}(K;\mbb R^d))^{\perp_k},\\
\label{eq:def_hdg+_elas_3}
(\bs\Pi^\mr{H+}\bs\sigma-\bs\sigma,\bs\varepsilon(\bs v))_K&=\dualpr{\bs{\mr P}_M(\bs\sigma\mb n)-\bs\sigma\mb n,\bs v}_{\pp K}
&& \forall \bs v\in \mc P_{k+1}(K;\mbb R^d),
\end{alignat}
and the second component $\bs\Pi^\mr{H+}\mb u:=\bs\Pi_{k+1}\mb u$ as the $L^2$ projection to $\mc P_{k+1}(K;\mbb R^d)$.
In the above equations, $(\cdot)^{\perp_k}$ represents the orthogonal complement in the background space $\mc P_k(K;\mbb R_\mr{sym}^{d\times d})$, the notation $\bs\varepsilon(\bs v):=\frac{1}{2}(\nabla\bs v+(\nabla\bs v)^\perp)$ represents the symmetric gradient, and
$\bs{\mr P}_M:L^2(\pp K;\mbb R^d)\rightarrow\mc R_k(\pp K;\mbb R^d)$ is the $L^2$ projection to the range space.

We define the associated boundary remainder as follows:
\begin{align}\label{eq:def_hdg+_elas_4}
\bs\delta_{\pm\tau}^{\Pi^\mr{H+}}(\bs\sigma,\bs u):= -(\bs\Pi^\mr{H+}\bs\sigma\,\mb n-\bs{\mr P}_M(\bs\sigma\mb n))\pm\bs\tau(\bs{\mr P}_M\bs\Pi^\mr{H+}\bs u-\bs{\mr P}_M\bs u)\in\mc R_k(\pp K;\mbb R^d),
\end{align}
\end{subequations}
where $\bs\tau\in\mc R_0(\pp K;\mbb R_\mr{sym}^{d\times d})$ satisfying \cite[Eqn. (2.1)]{DuSa_elas:2020}, namely, $\bs\tau$ is uniformly bounded and coercive.

The main result in this section is the following theorem. For notational convenience, we hide the dependence of $\bs\delta_{\pm\tau}^{\Pi^\mr{H+}}$ on $(\bs\sigma,\bs u)$.

\begin{theorem}[HDG+ projection for elasticity]\label{thm:pj_elas}
The projection $\bs\Pi^\mr{H+}$ and the remainder $\bs\delta_{\pm\tau}^{\Pi^\mr{H+}}$ are well defined 
by \eqref{eq:def_hdg+_elas} and they satisfy
\begin{subequations}\label{eq:hdgp_pj_el}
\begin{alignat}{5}
\label{eq:hdgp_pj_el_1}
(\bs\Pi^\mr{H+}\bs u -\bs u,\bs v)_K &= 0 \\
\nonumber&\quad\forall\bs v\in\mc P_{k-1}(K;\mbb R^d),\\
\label{eq:hdgp_pj_el_2}
\dualpr{-(\bs\Pi^\mr{H+}\bs\sigma\,\mb n-\bs\sigma\mb n)\pm\bs\tau(\bs\Pi^\mr{H+}\bs u-\bs u),\bs\mu}_{\pp K} &= 
\dualpr{\bs\delta_{\pm\tau}^{\Pi^\mr{H+}},\bs\mu}_{\pp K}\\
\nonumber&\quad\forall \bs\mu\in\mc R_k(\pp K;\mbb R^d),\\
\label{eq:hdgp_pj_el_3}
-(\div(\bs\Pi^\mr{H+}\bs\sigma-\bs\sigma),\bs w)_K\pm\dualpr{\bs\tau\bs{\mr P}_M(\bs\Pi^\mr{H+}\bs u-\bs u),\bs w}_{\pp K}
&= \dualpr{\bs\delta_{\pm\tau}^{\Pi^\mr{H+}},\bs w}_{\pp K}\\
\nonumber&\quad\forall \bs w\in\mc P_{k+1}(K;\mbb R^d).
\end{alignat}
\end{subequations}
Furthermore,
\begin{align}\label{eq:hdgp_pj_el_conv}
\|\bs\Pi^\mr{H+}\bs\sigma-\bs\sigma\|_K+h_K^{-1}\|\bs\Pi^\mr{H+}\bs u-\bs u\|_K + h_K^{\frac{1}{2}}\|\bs\delta_{\pm\tau}^{\Pi^\mr{H+}}\|_{\pp K}
\le Ch_K^m(|\bs\sigma|_{m,K}+|\bs u|_{m+1,K}),
\end{align}
where $m\in[\frac{1}{2}+\epsilon,k+1]$. Here, the constant $C$ depends only on $k$, $\gamma_K$, and the upper bound of $\bs\tau$.
\end{theorem}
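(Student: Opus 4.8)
\quad\\
\noindent\textbf{Proof plan.}
My plan is to transcribe, almost line for line, the argument of Section~\ref{sec:ell_hdg+_pfs} (Proposition~\ref{prop:g+} and the verification of \eqref{eq:hdgp_pj_ell_eqn}), replacing the scalar gradient $\nabla$ by the symmetric gradient $\bs\varepsilon(\cdot)$, the projection $\Pi_{k+1}$ by $\bs\Pi_{k+1}$, and the one-dimensional kernel of $\nabla$ on $\mc P_{k+1}(K)$ by the space of rigid-body motions $\mc{RM}(K):=\{\bs a+\mb B\bs x:\bs a\in\mbb R^d,\ \mb B^\perp=-\mb B\}$, which is exactly the kernel of $\bs\varepsilon$ on $\mc P_{k+1}(K;\mbb R^d)$. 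As in Proposition~\ref{prop:g+}, I would first note that \eqref{eq:def_hdg+_elas_2}--\eqref{eq:def_hdg+_elas_3} is a square linear system for $\bs\Pi^\mr{H+}\bs\sigma\in\mc P_k(K;\mbb R_\mr{sym}^{d\times d})$: the left-hand side of \eqref{eq:def_hdg+_elas_3} depends on $\bs v$ only through $\bs\varepsilon(\bs v)$, and (for $k\ge1$) the restriction of every $\bs v\in\mc{RM}(K)$ to $\pp K$ lies in $\mc R_k(\pp K;\mbb R^d)$, so the right-hand side of \eqref{eq:def_hdg+_elas_3} annihilates $\mc{RM}(K)$ as well; counting dimensions with the splitting $\mc P_k(K;\mbb R_\mr{sym}^{d\times d})=\bs\varepsilon(\mc P_{k+1}(K;\mbb R^d))\oplus\bs\varepsilon(\mc P_{k+1}(K;\mbb R^d))^{\perp_k}$ then shows the system is square, and unique solvability will drop out of the convergence estimate, exactly as in Proposition~\ref{prop:g+}.

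For that estimate I would set $\bs\varepsilon_\sigma:=\bs\Pi^\mr{H+}\bs\sigma-\bs\Pi_k\bs\sigma$, subtract, and get $(\bs\varepsilon_\sigma,\bs\theta)_K=0$ for $\bs\theta\in\bs\varepsilon(\mc P_{k+1}(K;\mbb R^d))^{\perp_k}$ and $(\bs\varepsilon_\sigma,\bs\varepsilon(\bs v))_K=\dualpr{\bs{\mr P}_M(\bs\sigma\mb n)-\bs\sigma\mb n,\bs v}_{\pp K}$ for $\bs v\in\mc P_{k+1}(K;\mbb R^d)$. Splitting $\bs\varepsilon_\sigma=\bs\varepsilon(\bs p)+\bs\varepsilon_\sigma^2$ with $\bs\varepsilon(\bs p)\in\bs\varepsilon(\mc P_{k+1}(K;\mbb R^d))$ and $\bs\varepsilon_\sigma^2\in\bs\varepsilon(\mc P_{k+1}(K;\mbb R^d))^{\perp_k}$ gives, using the first relation, $\|\bs\varepsilon_\sigma\|_K^2=(\bs\varepsilon_\sigma,\bs\varepsilon(\bs p))_K=\dualpr{\bs{\mr P}_M(\bs\sigma\mb n)-\bs\sigma\mb n,\bs p+\bs r}_{\pp K}$ for every $\bs r\in\mc{RM}(K)$. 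The step where the elliptic proof uses the elementary Poincar\'e bound $\|p-\Pi_0 p\|_K\lesssim h_K\|\nabla p\|_K$ is replaced here by a \emph{scaled Korn--Poincar\'e inequality}: there is $\bs r\in\mc{RM}(K)$ with $\|\bs p+\bs r\|_K\le C h_K\|\bs\varepsilon(\bs p)\|_K\le C h_K\|\bs\varepsilon_\sigma\|_K$, the constant $C$ depending only on $d$ and $\gamma_K$. Combining this with $\|\bs p+\bs r\|_{\pp K}\lesssim h_K^{-1/2}\|\bs p+\bs r\|_K$ from \eqref{eq:bs_inq_1} and with $\|\bs{\mr P}_M(\bs\sigma\mb n)-\bs\sigma\mb n\|_{\pp K}\le\|(\bs\Pi_k\bs\sigma-\bs\sigma)\mb n\|_{\pp K}\le\|\bs\Pi_k\bs\sigma-\bs\sigma\|_{\pp K}$ yields $\|\bs\varepsilon_\sigma\|_K\lesssim h_K^{1/2}\|\bs\Pi_k\bs\sigma-\bs\sigma\|_{\pp K}$; then \eqref{eq:bs_inq_2}, the triangle inequality and \eqref{eq:bs_inq_1} (exactly as at the end of the proof of Proposition~\ref{prop:g+}) give $\|\bs\Pi^\mr{H+}\bs\sigma-\bs\sigma\|_K+h_K^{1/2}\|\bs\Pi^\mr{H+}\bs\sigma-\bs\sigma\|_{\pp K}\lesssim h_K^m|\bs\sigma|_{m,K}$, and well-posedness by taking $\bs\sigma=0$.

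It then remains to verify \eqref{eq:hdgp_pj_el} and \eqref{eq:hdgp_pj_el_conv}. Identity \eqref{eq:hdgp_pj_el_1} is immediate since $\bs\Pi^\mr{H+}\bs u=\bs\Pi_{k+1}\bs u$; \eqref{eq:hdgp_pj_el_2} is a rewriting of the definition \eqref{eq:def_hdg+_elas_4}, because for $\bs\mu\in\mc R_k(\pp K;\mbb R^d)$ the quantities $\bs\Pi^\mr{H+}\bs\sigma\,\mb n$ and $\bs\tau\bs\mu$ again lie in $\mc R_k(\pp K;\mbb R^d)$, so $\bs{\mr P}_M$ can be inserted freely; and \eqref{eq:hdgp_pj_el_3} follows by integrating by parts, $-(\div(\bs\Pi^\mr{H+}\bs\sigma-\bs\sigma),\bs w)_K=-\dualpr{(\bs\Pi^\mr{H+}\bs\sigma-\bs\sigma)\mb n,\bs w}_{\pp K}+(\bs\Pi^\mr{H+}\bs\sigma-\bs\sigma,\bs\varepsilon(\bs w))_K$ (using $\bs\sigma^\perp=\bs\sigma$ to turn $\nabla\bs w$ into $\bs\varepsilon(\bs w)$), and then substituting \eqref{eq:def_hdg+_elas_3}; the $\pm$/$\mp$ bookkeeping goes through just as in \eqref{eq:cv_p_9}. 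Finally, in \eqref{eq:hdgp_pj_el_conv} the stress term is the estimate just proved, the displacement term is $h_K^{-1}\|\bs\Pi_{k+1}\bs u-\bs u\|_K\lesssim h_K^m|\bs u|_{m+1,K}$ by \eqref{eq:bs_inq_2}, and the remainder term follows from \eqref{eq:def_hdg+_elas_4}, the upper bound on $\bs\tau$, and $\|\bs\Pi^\mr{H+}\bs\sigma\,\mb n-\bs{\mr P}_M(\bs\sigma\mb n)\|_{\pp K}\le\|\bs\Pi^\mr{H+}\bs\sigma-\bs\sigma\|_{\pp K}$ together with \eqref{eq:bs_inq_2}, the extra power of $h_K$ being absorbed using $h_K\lesssim 1$.

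The single genuinely new ingredient is the scaled Korn--Poincar\'e inequality used in the second paragraph; everything else is a direct transcription of Section~\ref{sec:ell_hdg+_pfs}. I would obtain it from the classical Korn inequality on a domain star-shaped with respect to a ball --- whose constant is controlled by the chunkiness ratio $h_K/\rho_K\le\gamma_K$ --- together with a Poincar\'e inequality and the usual scaling, or equivalently by assembling Korn on the shape-regular simplices provided by the simplex-decomposition hypothesis on $K$; this is precisely where the shape-regularity assumptions on $\mc T_h$ enter, just as they do in Lemma~\ref{lm:bs_inq}. The reason it cannot be avoided is that $\mc{RM}(K)$ is genuinely multidimensional (translations \emph{and} rotations), so the cheap one-parameter choice $\bs r=-\bs\Pi_0\bs p$ that sufficed in the scalar case no longer subtracts out the full kernel of $\bs\varepsilon$.
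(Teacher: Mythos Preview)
Your proof is correct and follows essentially the same route as the paper's. The paper also reduces to an elasticity analogue of Proposition~\ref{prop:g+}, sets $\bs\varepsilon_\sigma:=\bs\Pi^\mr{H+}\bs\sigma-\bs\Pi_k\bs\sigma$, splits it along $\bs\varepsilon(\mc P_{k+1})\oplus\bs\varepsilon(\mc P_{k+1})^{\perp_k}$, and arrives at the same identity $\|\bs\varepsilon_\sigma\|_K^2=\dualpr{\bs{\mr P}_M(\bs\sigma\mb n)-\bs\sigma\mb n,\bs p+\bs m}_{\pp K}$. The one place the arguments differ in presentation is precisely the step you flag as ``the single genuinely new ingredient'': where you invoke a scaled Korn--Poincar\'e inequality and sketch its proof from chunkiness/simplex-decomposition, the paper simply cites \cite[Lemma~4.1]{QiShSh:2018}, which is exactly that inequality (a bound of $\inf_{\bs m\in\mc M}\|\bs p+\bs m\|_{\pp K}$ by $h_K^{1/2}\|\bs\varepsilon(\bs p)\|_K$). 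The verification of \eqref{eq:hdgp_pj_el_1}--\eqref{eq:hdgp_pj_el_3} and of \eqref{eq:hdgp_pj_el_conv} is likewise identical to the paper's.
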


Note that the two boundary remainders $\bs\delta_{+\tau}^{\Pi^\mr{H+}}$ and $\bs\delta_{-\tau}^{\Pi^\mr{H+}}$ correspond to the HDG+ projection and the adjoint projection in \cite[Theorem 2.1]{DuSa_elas:2020}, respectively.
We also remark that we have used the HDG+ projection to define the initial velocity for the semi-discrete HDG+ scheme in \cite{DuSa_elas:2020}. Therefore,  equations \eqref{eq:def_hdg+_elas} provide a way of calculating the initial conditions for the semi-discrete scheme for elastic waves.

\subsection{Proof of Theorem \ref{thm:pj_elas}}
In this subsection, we prove Theorem \ref{thm:pj_elas}. The proof here will be similar to the proof of Theorem \ref{thm:hdgp_pj_ell} in Section \ref{sec:ell_hdg+_pfs}. 

\begin{prop}\label{prop:sg+}
The projection $\bs\Pi^\mr{H+}$ is well defined by \eqref{eq:def_hdg+_elas_2} and \eqref{eq:def_hdg+_elas_3}, and we have
\begin{align}\label{eq:sg+_conv}
h_K^{\frac{1}{2}}\|\bs\Pi^\mr{H+}\bs\sigma-\bs\sigma\|_{\pp K}+\|\bs\Pi^\mr{H+}\bs\sigma-\bs\sigma\|_K\le 
Ch_K^m|\bs\sigma|_{m,K},
\end{align}
where $m\in[\frac{1}{2}+\epsilon,k+1]$. Here, the constant $C$ depends only on $k$ and the shape-regularity constant $\gamma_K$.
\end{prop}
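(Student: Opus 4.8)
The plan is to mimic the proof of Proposition \ref{prop:g+} almost verbatim, replacing the gradient $\nabla$ by the symmetric gradient $\bs\varepsilon(\cdot)$ and the scalar background spaces by their matrix-valued analogues. First I would observe that \eqref{eq:def_hdg+_elas_2} and \eqref{eq:def_hdg+_elas_3} form a square system, so that well-posedness follows once we establish the a priori bound \eqref{eq:sg+_conv}. Setting $\bs\varepsilon_\sigma:=\bs\Pi^\mr{H+}\bs\sigma-\bs\Pi_k\bs\sigma\in\mc P_k(K;\mbb R_\mr{sym}^{d\times d})$ and subtracting $\bs\Pi_k\bs\sigma$ from the defining equations (using that $(\bs\Pi_k\bs\sigma-\bs\sigma,\bs\theta)_K=0$ for any polynomial test $\bs\theta$), I would get
\begin{subequations}
\begin{alignat}{5}
(\bs\varepsilon_\sigma,\bs\theta)_K &=0 &\quad& \forall\bs\theta\in\bs\varepsilon(\mc P_{k+1}(K;\mbb R^d))^{\perp_k},\\
(\bs\varepsilon_\sigma,\bs\varepsilon(\bs v))_K&=\dualpr{\bs{\mr P}_M(\bs\sigma\mb n)-\bs\sigma\mb n,\bs v}_{\pp K} && \forall \bs v\in\mc P_{k+1}(K;\mbb R^d).
\end{alignat}
\end{subequations}
Decomposing $\bs\varepsilon_\sigma=\bs\varepsilon_\sigma^1+\bs\varepsilon_\sigma^2$ with $\bs\varepsilon_\sigma^1\in\bs\varepsilon(\mc P_{k+1}(K;\mbb R^d))$ and $\bs\varepsilon_\sigma^2$ in its $\perp_k$-complement, the first equation gives $\|\bs\varepsilon_\sigma\|_K^2=(\bs\varepsilon_\sigma,\bs\varepsilon_\sigma^1)_K$; writing $\bs\varepsilon_\sigma^1=\bs\varepsilon(\bs p+\bs r)$ with $\bs p\in\mc P_{k+1}(K;\mbb R^d)$ and $\bs r$ an arbitrary rigid-body motion (so that $\bs\varepsilon(\bs r)=\bs 0$), the second equation yields $\|\bs\varepsilon_\sigma\|_K^2=\dualpr{\bs{\mr P}_M(\bs\sigma\mb n)-\bs\sigma\mb n,\bs p+\bs r}_{\pp K}\lesssim h_K^{-1/2}\|\bs{\mr P}_M(\bs\sigma\mb n)-\bs\sigma\mb n\|_{\pp K}\|\bs p+\bs r\|_K$.

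The crucial ingredient, and the step I expect to be the main obstacle, is the Korn-type inequality that replaces the elementary bound $\|\nabla p\|_K\le\|\bs\varepsilon_q\|_K$ available in the scalar case. I need: for every $\bs p\in\mc P_{k+1}(K;\mbb R^d)$ there is a rigid-body motion $\bs r$ with $\|\bs p+\bs r\|_K\lesssim h_K\|\bs\varepsilon(\bs p)\|_K$, with the hidden constant depending only on $k$ and $\gamma_K$. This is a scaled, polynomial second Korn inequality on a star-shaped polyhedron; it follows by a standard scaling argument from the reference-element Korn inequality on a fixed shape-regular configuration, but on general polyhedra one must be careful that the constant is controlled by the chunkiness/simplex/quasi-uniformity data in $\gamma_K$ rather than by the element itself. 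I would cite the polyhedral Korn inequality in the literature (e.g.\ the versions in \cite{BrSc:2008} or Brenner-type arguments for piecewise-polynomial Korn inequalities) combined with a Bramble-Hilbert/scaling argument, exactly as the paper already does for the basic inequalities in Lemma \ref{lm:bs_inq}. Granting this, choosing $\bs r$ to be precisely the rigid-body motion that makes $\bs p+\bs r$ orthogonal to the rigid motions gives $\|\bs\varepsilon_\sigma\|_K\lesssim h_K^{1/2}\|\bs{\mr P}_M(\bs\sigma\mb n)-\bs\sigma\mb n\|_{\pp K}$.

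Finally I would close the estimate just as in Proposition \ref{prop:g+}: bound $\|\bs{\mr P}_M(\bs\sigma\mb n)-\bs\sigma\mb n\|_{\pp K}\le\|\bs\sigma\mb n-\bs\Pi_k(\bs\sigma\mb n)\|_{\pp K}$ (since $\bs{\mr P}_M$ is an $L^2$-projection it is the best approximation, in particular no worse than the componentwise $L^2$-projection of degree $k$ on each face) and invoke \eqref{eq:bs_inq_2} to get $\|\bs\varepsilon_\sigma\|_K\lesssim h_K^m|\bs\sigma|_{m,K}$ for $m\in[\tfrac12+\epsilon,k+1]$; then the volumetric part of \eqref{eq:sg+_conv} follows from the triangle inequality $\|\bs\Pi^\mr{H+}\bs\sigma-\bs\sigma\|_K\le\|\bs\varepsilon_\sigma\|_K+\|\bs\Pi_k\bs\sigma-\bs\sigma\|_K$ together with \eqref{eq:bs_inq_2} again, and the boundary part from the inverse inequality \eqref{eq:bs_inq_1}, writing $\|\bs\Pi^\mr{H+}\bs\sigma-\bs\sigma\|_{\pp K}\le\|\bs\varepsilon_\sigma\|_{\pp K}+\|\bs\Pi_k\bs\sigma-\bs\sigma\|_{\pp K}\lesssim h_K^{-1/2}\|\bs\varepsilon_\sigma\|_K+\|\bs\Pi_k\bs\sigma-\bs\sigma\|_{\pp K}$ and bounding the last term by \eqref{eq:bs_inq_2}. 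The unique solvability of the square system \eqref{eq:def_hdg+_elas_2}--\eqref{eq:def_hdg+_elas_3} then follows, because the a priori bound applied with $\bs\sigma=\bs 0$ forces the only solution of the homogeneous system to vanish.
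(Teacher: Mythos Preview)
Your proposal is correct and follows essentially the same route as the paper's proof: the same auxiliary quantity $\bs\varepsilon_\sigma=\bs\Pi^\mr{H+}\bs\sigma-\bs\Pi_k\bs\sigma$, the same orthogonal decomposition, and the same closing triangle/inverse-inequality argument. The only difference is in how the Korn step is sourced: the paper invokes \cite[Lemma~4.1]{QiShSh:2018}, which already packages the scaled polynomial Korn inequality on shape-regular polyhedra (yielding $\|\bs p+\bs m\|_{\pp K}\lesssim h_K^{1/2}\|\bs\varepsilon(\bs p)\|_K$ for a suitable rigid motion $\bs m$), whereas you reconstruct it from a trace inequality plus a generic polyhedral second Korn inequality; both lead to the same bound $\|\bs\varepsilon_\sigma\|_K\lesssim h_K^{1/2}\|\bs{\mr P}_M(\bs\sigma\mb n)-\bs\sigma\mb n\|_{\pp K}$.
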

\begin{proof}
We can easily verify that \eqref{eq:def_hdg+_elas_2} and \eqref{eq:def_hdg+_elas_3} define a square system. We next prove the convergence equation \eqref{eq:sg+_conv}, from which the unique solvability of \eqref{eq:def_hdg+_elas_2} and \eqref{eq:def_hdg+_elas_3} follows automatically. Let $\bs\varepsilon_\sigma:=\bs\Pi^\mr{H+}\bs\sigma-\bs\Pi_k\bs\sigma$. By \eqref{eq:def_hdg+_elas_2} and \eqref{eq:def_hdg+_elas_3}, we have
\begin{subequations}\label{eq:strpj_pf_1}
\begin{alignat}{5}
\label{eq:strpj_pf_1a}
(\bs\varepsilon_\sigma,\bs\theta)_K &=0 &\quad& \forall\bs\theta\in \bs\varepsilon(\mc P_{k+1}(K;\mbb R^d))^{\perp_k},\\
\label{eq:strpj_pf_1b}
(\bs\varepsilon_\sigma,\bs\varepsilon(\bs v))_K&=\dualpr{\bs{\mr P}_M(\bs\sigma\mb n)-\bs\sigma\mb n,\bs v}_{\pp K}
&&\forall \bs v\in \mc P_{k+1}(K;\mbb R^d).
\end{alignat}
\end{subequations}
We now decompose $\bs\varepsilon_\sigma$ into the summation $\bs\varepsilon_\sigma=\bs\varepsilon_\sigma^1+\varepsilon_\sigma^2$, where $\varepsilon_\sigma^1\in \bs\varepsilon(\mc P_{k+1}(K;\mbb R^d))$ and $\varepsilon_\sigma^2\in \bs\varepsilon(\mc P_{k+1}(K;\mbb R^d))^{\perp_k}$. Since $\bs\varepsilon_\sigma^1\in\bs\varepsilon(\mc P_{k+1}(K;\mbb R^d))$, we can write $\bs\varepsilon_\sigma^1=\bs\varepsilon(\bs p+\bs m)$ for some $\bs p\in\mc P_{k+1}(K;\mbb R^d)$ and arbitrary rigid motion $\bs m\in\mc M$.  By \eqref{eq:strpj_pf_1a} and \eqref{eq:strpj_pf_1b} we have
\begin{align*}
\|\bs\varepsilon_\sigma\|_{K}^2=(\bs\varepsilon_\sigma,\bs\varepsilon_\sigma^1)_K
=(\bs\varepsilon_\sigma,\bs\varepsilon(\bs p+\bs m))_K
=\dualpr{\bs{\mr P}_M(\bs\sigma\mb n)-\bs\sigma\mb n,\bs p+\bs m}_{\pp K}.
\end{align*}
We next apply \cite[Lemma 4.1]{QiShSh:2018} to the term $\bs p+\bs m$ and then obtain
\begin{align*}
\|\bs\varepsilon_\sigma\|_{K}^2\lesssim h_K^{\frac{1}{2}}\|\bs{\mr P}_M(\bs\sigma\mb n)-\bs\sigma\mb n\|_{\pp K}\|\bs\varepsilon(\bs p)\|_K\le h_K^{\frac{1}{2}}\|\bs{\mr P}_M(\bs\sigma\mb n)-\bs\sigma\mb n\|_{\pp K}\|\bs\varepsilon_\sigma\|_K.
\end{align*}
To estimate the boundary term $\|\bs\Pi^\mr{H+}\bs\sigma-\bs\sigma\|_{\pp K}$, note that
\begin{align*}
\|\bs\Pi^\mr{H+}\bs\sigma-\bs\sigma\|_{\pp K}\le \|\bs\Pi_k\bs\sigma-\bs\sigma\|_{\pp K}
+ \|\bs\varepsilon_\sigma\|_{\pp K}
\lesssim \|\bs\Pi_k\bs\sigma-\bs\sigma\|_{\pp K}
+ h_K^{-\frac{1}{2}}\|\bs\varepsilon_\sigma\|_{K}.
\end{align*}
We then use \eqref{eq:bs_inq_2} and the proof is completed.

\end{proof}

Let us now prove Theorem \ref{thm:pj_elas}.
By Proposition \ref{prop:sg+}, we know $\bs\Pi^\mr{H+}$ and $\bs\delta_{\pm\tau}^{\Pi^\mr{H+}}$ are well defined by \eqref{eq:def_hdg+_elas}.
We next prove equations \eqref{eq:hdgp_pj_el}.
Equations \eqref{eq:hdgp_pj_el_1} and \eqref{eq:hdgp_pj_el_2} hold obviously by the definition $\bs\Pi^\mr{H+}\mb u:=\bs\Pi_{k+1}\mb u$ and \eqref{eq:def_hdg+_elas_4}.

To prove \eqref{eq:hdgp_pj_el_3}, first note that
\begin{align}
\label{eq:pf_17}
&-(\div(\bs\Pi^\mr{H+}\bs\sigma-\bs\sigma),\bs w)_K
\pm\dualpr{\bs\tau\bs{\mr P}_M(\bs\Pi^\mr{H+}\bs u-\bs u),\bs w}_{\pp K}\\
\nonumber
&\qquad=\dualpr{-(\bs\Pi^\mr{H+}\bs\sigma\,\mb n-\bs\sigma\mb n)\pm\bs\tau\bs{\mr P}_M(\bs\Pi^\mr{H+}\bs u-\bs u),\bs w}_{\pp K}+(\bs\Pi^\mr{H+}\bs\sigma-\bs\sigma,\bs\varepsilon(\bs w))_K,
\end{align}
for all $\bs w\in\mc P_{k+1}(K;\mbb R^d)$. 
Equations \eqref{eq:pf_17} and \eqref{eq:def_hdg+_elas_3} then imply \eqref{eq:hdgp_pj_el_3}. 

The convergence property \eqref{eq:hdgp_pj_el_conv} holds because of equations \eqref{eq:sg+_conv} and \eqref{eq:bs_inq_2}, and the fact that $\bs\tau$ is uniformly bounded. This completes the proof.

\section*{Conclusions}
We have devised two new HDG+ projections on polyhedral elements, extending our previous results in \cite{DuSa:2019} for elliptic problems and the results in \cite{DuSa_elas:2020} for elasticity to polyhedral meshes. The projections here are constructed in a different way without using the $M$-decompositions as a middle step. Consequently, the construction is more straightforward. 

We would like to mention that in \cite{CoDiEr:2016} and also in \cite[Section 5.1.6]{DiDr:2020}, connections between Hybrid-High order (HHO) methods and HDG methods are established, making possible to ``incorporate into HDG methods the new, subtle way of defining the numerical trace for the flux in HHO methods" (see \cite[Conclusion]{CoDiEr:2016}). As a result, the HDG+ method can be associated to one of these HDG methods with HHO stabilization and analyzed within the HHO framework. 
Though, benefits of analyzing the HDG+ method with projection-based analysis includes: (1) reusing existing analysis techniques of HDG methods, such as those in \cite{DuSa_elas:2020}, where the HDG+ projection was used to devise and analyze a semi-discrete HDG+ method for transient elastic waves, by using existing analysis techniques from \cite{CoFuHuJiSaSa:2018}, where a standard HDG method for acoustic waves was proposed and analyzed by the classical HDG projection; (2) a simple and concise analysis of mixed-type methods where different stabilizations and approximation spaces are used on different elements, by adopting correspondingly different projections to capture the features on each element to minimize the boundary remainder and then obtain a single form of the energy/duality identity. 
A natural question is whether the projection-remainder way of analysis established in this paper can be applied those HDG methods with more subtle stabilization functions. 
This constitutes a possible future work.

\vspace{0.5cm}
\noindent{\bf Acknowledgments.} This work was partially supported by the NSF grant DMS-1818867. The first author would like to thank B. Cockburn for the discussions that lead the paper to a better form, and for supporting his visit to the University of Minnesota, where the paper was written. The author also wants to thank the anonymous referees for their revision suggestions which have improved the presentation of the paper.

\bibliographystyle{abbrv}
\bibliography{allref_0330pm_5_3_2020}

\end{document}